\newtheorem{lemma}{Lemma}
\newtheorem{theorem}{Theorem}
\newtheorem*{theoremT}{Theorem T}
\def\hm#1{#1\nobreak\discretionary{}{\hbox{\ensuremath{#1}}}{}}
\def\Z{{\mathbb Z}}
\def\N{{\mathbb N}}
\def\R{{\mathbb R}}
\def\taue{\tau^{(e)}}
\def\tt{{\mathfrak t}}
\def\eps{\varepsilon}
\def\res{\mathop{\mathrm{res}}}
\def\le{\leqslant}
\def\ge{\geqslant}
\def\dd{\mathrm{d}}
\def\OO{\mathrm{O}}
\def\oo{\mathrm{o}}
\newcommand*\circled[1]{\tikz[baseline=(char.base)]{
    \node[shape=circle,draw,inner sep=0.5pt] (char) {$#1$};}}
\newcommand*\ellipsed[1]{\tikz[baseline=(char.base)]{
    \node[shape=rectangle,draw,inner sep=1pt,rounded corners=1ex] (char) {$#1$};}}
\newcommand*\circleds[1]{\tikz[baseline=(char.base)]{
    \node[shape=circle,draw,inner sep=0.2pt] (char) {$#1$};}}
\newcommand*\ellipseds[1]{\tikz[baseline=(char.base)]{
    \node[shape=rectangle,draw,inner sep=1.5pt,rounded corners=1ex] (char) {$#1$};}}
\def\m{{\circled{m}}}
\def\mm{{\ellipsed{m-1}}}
\def\ms{{\circleds{\scriptstyle m}}}
\def\mms{{\ellipseds{\scriptstyle m-1}}}
\begin{document}

\title{Exponential divisor functions}
\author{Andrew V. Lelechenko}
\address{I.~I.~Mechnikov Odessa National University}
\email{1@dxdy.ru}

\keywords{Exponential divisor function, generalized divisor function, average order}
\subjclass[2010]{
11N37, 
11M06, 
11A25  
}

\begin{abstract}
Consider the operator~$E$ on arithmetic functions such that~$Ef$ is the multiplicative arithmetic function defined by~$(Ef)(p^a) \hm= f(a)$ for every prime power $p^a$. We investigate the behaviour of~$E^m\tau_k$, where $\tau_k$ is a $k$-dimensional divisor function and~$E^m$ stands for the $m$-fold iterate of~$E$. We estimate the error terms of~$\sum_{n\le x} E^m\tau_k(n)$ for various combinations of $m$ and~$k$. We also study properties of~$E^mf$ for arbitrary $f$ and sufficiently large~$m$.

Our study provides a unified approach to functions with exponential divisors. We improve special cases of the Dirichlet asymmetric divisor problem and several results on the exponential divisor and totient functions.
\end{abstract}

\maketitle

\section{Introduction}

Consider the set $\mathcal A$ of arithmetic functions, the set $\mathcal M_{PI}$ of multiplicative prime-in\-de\-pen\-dent functions  and the operator $E\colon {\mathcal A} \to {\mathcal M}_{PI}$ such that
$$
(Ef)(p^a) = f(a)
$$
for every prime power $p^a$.
The behaviour of $Ef$ for various special cases of $f$ has been widely studied, starting with the pioneering paper of Subbarao~\cite{subbarao1972} on~$E\tau$ and~$E\mu$, where $\tau$ is the divisor function and $\mu$ is the Möbius function. The most notable of them are the paper of Wu~\cite{wu1995} on~$E\tau$, the paper of Pétermann and Wu~\cite{petermann1997} on the sum of $E$-divisors, the papers of Tóth~\cite{toth2004,toth2007b,toth2007a} on certain $E$-functions, the paper of Pétermann \cite{petermann2010} on $E\phi$, the paper of Cao and Zhai \cite{cao2010} on estimates of certain $E$-functions under Riemann hypothesis (RH).

The established by previous authors notation for $Ef$ is $f^{(e)}$. We write~$E^2f(n)$, $E^3f(n)$\dots\ meaning~$(E^2f)(n)$, $(E^3f)(n)$\dots

The primary aim of the current paper is to investigate effects of essentially multiple applications of operator $E$ on different functions, but we also obtain several important results in the case of the single application.

We improve previously known error terms for the special cases of Dirichlet asymmetric divisor problem (namely, for~$\tau_{1,2^r}$ in Lemma~\ref{l:sum-of-tau1m-precise} and for~$\tau_{1,2^r,2^r}$ in Lemma~\ref{l:sum-of-tau1mm-precise}).
Section \ref{s:theorem-t} is dedicated to the refinement of Tóth's theorem~\cite{toth2007a}.

Mostly we consider divisor functions, but in the last section a generalization over all arithmetic functions is discussed.

\section{Notation}

In asymptotic relations we use
Landau symbols $\Omega$, $\OO$ and $\oo$, Vinogradov symbols~$\ll$ and $\gg$ in their usual meanings. All asymptotic relations are given as an argument (usually $x$) tends to $+\infty$.

Letter $p$ with or without indexes denotes a rational prime.

As usual $\zeta(s)$ is the Riemann zeta-function. For complex $s$ we denote~$\sigma:=\Re s$ and~$t:=\Im s$.

Letter $\gamma$ denotes the Euler–Mascheroni constant, $\gamma \approx 0.577$.

Everywhere $\eps>0$ is an arbitrarily small number (not always the same even in one equation).

We write $f\star g$ for the Dirichlet convolution:
$ (f \star g)(n) = \sum_{d \mid n} f(d) g(n/d) $.

\smallskip

Let $\tau$ be the divisor function, $\tau(n) = \sum_{d|n} 1$.
Denote
$$
\tau(a_1,\ldots,a_k; n) = \sum_{d_1^{a_1}\cdots d_k^{a_k} = n} 1
$$
and $\tau_k = \tau(\underbrace{1,\ldots,1}_{k\text{~times}}; \cdot)$.
Then $\tau \equiv \tau_2 \equiv \tau(1,1; \cdot)$.

Now let $\Delta(a_1,\ldots,a_k; x)$ be an error term in the asymptotic estimate of the sum~$\sum_{n\le x} \tau(a_1,\ldots,a_k; n)$. (See \cite{kratzel1988} for the form of the main term.) For the sake of brevity denote $
\Delta_k(x) = \Delta(\underbrace{1,\ldots,1}_{k\text{~times}}; x)
$.

Finally, $\theta(a_1,\ldots,a_k)$ denotes throughout the paper a real value such that
$$ \Delta(a_1,\ldots,a_k; x) \ll x^{\theta(a_1,\ldots,a_k)+\eps}. $$

We abbreviate $\theta_k$ for the exponent of $x$ in $\Delta_k(x)$.

\section{Elementary properties of $E$}

One can easily check that
$$
E(f+g) = Ef+Eg
\qquad \text{and} \qquad
E(f\cdot g) = Ef \cdot Eg.
$$

Define (following Subbarao~\cite{subbarao1972}) exponential  convolution $\odot$ as
$$
(f \odot g) (p^n) = \sum_{d\mid n} f(p^d) g(p^{n/d}).
$$

Then
$
E(f\star g) = Ef \odot Eg
$,
because
\begin{multline*}
E(f\star g) (p^n)
= (f\star g)(n)
= \sum_{d\mid n} f(d) g(n/d)
= \\
= \sum_{d\mid n} Ef(p^d) Eg(p^{n/d})
= (Ef \odot Eg)(p^n).
\end{multline*}



\begin{lemma}
The only solution of $Ef = f$ is $f(n) = 1$ for all $n$.
\end{lemma}
\begin{proof}
Assume the contrary: there is $n$ such that $f(n)\ne1$. Choose the least of all such $n$.

This number cannot have more than one prime divisor: if $n=\prod_i p_i^{a_i}$ and $f(n)\hm\ne1$ then for some index $i$ we have $f(p_i^{a_i}) \ne 1$ and $n$ is not the least. Now $n=p^a$. Since $f$ is prime-independent $f(2^a) = f(n) \ne 1$, so in fact $n=2^a$.

The requirement $Ef = f$ induces~$f(n)=f(a)$. But $a<n$; it is a~contradiction.
\end{proof}

\begin{lemma}
For every arithmetic function $f$ such that $f(n) \ll n^a$ for some constant~$a$ we have
$ Ef(n) \ll n^\eps $.
\end{lemma}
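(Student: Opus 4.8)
The key point is that $Ef$ is multiplicative and prime-independent, so to bound $Ef(n)$ it suffices to bound the product of local factors $\prod_{p^a \| n} f(a)$. Since $f(a) \ll a^c$ for some constant $c$ (say $c \ge 1$), the task reduces to showing $\prod_{p^a \| n} a^c \ll n^\eps$. The plan is first to reduce to the extremal case and then to invoke the standard estimate that any function of the form $n \mapsto \prod_{p^a\|n} g(a)$ with $g$ growing polynomially is $\ll n^\eps$; this is essentially the same argument that gives $\tau(n) \ll n^\eps$.

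First I would write $n = \prod_{i} p_i^{a_i}$ and note
$$ Ef(n) = \prod_i f(a_i) \ll \prod_i a_i^c = \prod_{i\,:\, a_i \ge 2} a_i^c, $$
since terms with $a_i = 1$ contribute a bounded factor (at most $f(1)^{\omega(n)}$, but more cleanly: $f(1)$ is a constant and one can absorb it — actually one must be a little careful if $|f(1)| > 1$, so I would instead bound $f(a_i) \le C a_i^c$ uniformly for all $a_i \ge 1$ with some absolute constant $C \ge 1$, giving a factor $C^{\omega(n)} = n^{o(1)}$ by the classical bound $\omega(n) \ll \log n / \log\log n$).

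Next, for the main product $\prod_{a_i \ge 2} a_i^c$: if $p_i^{a_i} \| n$ with $a_i \ge 2$ then $p_i^{a_i} \ge 2^{a_i}$, so $a_i \le \log n / \log 2$, and moreover the number of such primes is $O(\log n)$. Taking logarithms,
$$ \log \prod_{a_i \ge 2} a_i^c = c \sum_{a_i \ge 2} \log a_i \ll (\log n)(\log\log n), $$
which is $o(\log n)$ times... no — this is $\ll (\log n)(\log\log n)$, not $o(\log n)$. The sharper route is standard: fix $\eps > 0$; for $a_i \ge 2$ we have $a_i^c \le p_i^{\eps a_i}$ as soon as $p_i \ge p_0(\eps,c)$, since $a_i^c = 2^{c \log_2 a_i} \le 2^{\eps a_i \log_2 p_i}$ holds once $p_i$ is large (using $\log_2 a_i \le \eps a_i$ for the finitely many small $a_i$... ). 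Cleaner: $a_i^c \ll_\eps 2^{\eps a_i} \le p_i^{\eps a_i}$ because $a_i^c / 2^{\eps a_i} \to 0$. Hence the contribution of primes $p_i \ge 2$ to the product is $\ll_\eps \prod_i p_i^{\eps a_i} = n^\eps$, while the finitely many primes below any fixed bound contribute $O_\eps(1)$ — but there is no fixed bound here since all $p_i \ge 2$ already, so in fact $a_i^c \ll_\eps 2^{\eps a_i} \le p_i^{\eps a_i}$ directly for all $i$, and the whole product is $\ll_\eps n^\eps$.

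The only genuine subtlety — and the step I'd expect a careful reader to scrutinize — is handling the constant $C$ in $f(a) \le C a^c$ when $C > 1$, since then $\prod C$ over $\omega(n)$ primes is not obviously $n^\eps$. This is resolved by the classical bound $\omega(n) = O(\log n/\log\log n)$, so $C^{\omega(n)} = \exp(O(\log n/\log\log n)) = n^{o(1)} \ll n^\eps$. Combining this with the bound $\prod_i a_i^c \ll_\eps n^\eps$ from the previous paragraph (replacing $\eps$ by $\eps/2$ throughout) completes the proof.
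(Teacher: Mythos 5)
Your proof is correct, but it takes a genuinely different route from the paper. The paper disposes of the lemma in two lines by citing the maximal-order result of Suryanarayana and Sita Rama Chandra Rao, $\limsup_{n\to\infty} \log Ef(n)\log\log n/\log n = \sup_n \log f(n)/n$: under $f(n)\ll n^a$ the right-hand side is finite, so $\log Ef(n)/\log n\to0$ and the bound follows at once. You instead give a self-contained elementary argument modelled on the classical proof of $\tau(n)\ll n^\eps$: write $Ef(n)=\prod_{p^a\|n}f(a)$, bound each local factor by $Ca^c$, compare $a^c$ with $p^{\eps a}$, and control the accumulated constants via $\omega(n)\ll\log n/\log\log n$. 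Your approach buys independence from the cited reference (and makes transparent exactly which growth hypothesis on $f$ is used); the paper's approach is shorter and actually delivers the precise maximal order rather than just the $n^\eps$ bound. One step you should tighten: the inequality $a^c\ll_\eps 2^{\eps a}\le p^{\eps a}$ carries its own implied constant $C_\eps$ for each prime factor, so the product of these constants over the prime divisors of $n$ is $C_\eps^{\omega(n)}$ and must be absorbed by the same $\omega(n)=O(\log n/\log\log n)$ device that you invoke for the constant $C$ in $f(a)\le Ca^c$; as written you assert that the product over all $i$ is $\ll_\eps n^\eps$ ``directly,'' which glosses over this accumulation. Since you already state the needed tool in your final paragraph, this is a presentational gap rather than a mathematical one.
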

\begin{proof}
By result of Suryanarayana and Sita Rama Chandra Rao \cite{suryanarayana1975} we have
$$
\limsup_{n\to\infty} {\log Ef(n) \log\log n \over \log n} = \sup_n {\log f(n) \over n}.
$$
But as soon as $f(n) \ll n^a$ the right side is bounded and so
$$
{\log Ef(n) \over \log n} \to 0 \quad \text{as~} n\to\infty.
$$
It means that $Ef(n) \ll n^\eps$.
\end{proof}

\section{Multiexponential divisor functions}

Consider the family of functions
$$
\tau^{(e)}, E^2\tau, E^3\tau \ldots
$$

Define $A_m := \{ n \mid E^m\tau(n) \ne 1 \}$. Then
$$
A_0 = \N \setminus \{1\},
\qquad
A_1 = \{ n \mid \mu(n)=0 \} = \{ 4,8,9,12\ldots \}
$$
$$
A_m = \bigcup_{p} \bigcup_{a\in A_{m-1}} p^a R_p,
\quad \text{where} \quad
R_p := \{ n \mid \gcd(n,p)=1 \}
$$

Which are the lowest elements of these infinite sets? Denote
$$
\m = \min A_m,
\qquad
\m' = \min (A_m \setminus \{\m\}),
\qquad
\m'' = \min (A_m \setminus \{\m, \m'\}).
$$
Then $\circled0 = 2$, $\circled1 = 4$, $\circled2 = 16$, $\circled3 = 65\,536$ and generally $\m = 2^{\mms}$.

The set $A_m$ contains not only $\m$, but also numbers of form $(2k+1)\m$ for each integer $k$. Other elements of $A_m$ must be divisible by $n=p^a$, where either $p>2$ or $a>\mm$. Let
$$
B_m := A_m \setminus (2\N+1) \m.
$$

\begin{lemma}
For integer $m>1$ we have
\begin{equation}\label{eq:m'-m''}
\m' = 3\m,
\qquad
\m'' = 5\m
\end{equation}
\end{lemma}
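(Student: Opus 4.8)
The plan is to prove, by induction on $m\ge 2$, the sharper assertion
$$
A_m \cap (\m,5\m] = \{3\m,5\m\},
$$
which contains both claimed equalities. Recall $\m=2^{\mms}$, where $\mm=\min A_{m-1}$. From the recursion $A_m=\bigcup_p\bigcup_{a\in A_{m-1}}p^a R_p$ one reads off the membership rule: $n\in A_m$ if and only if $p^a\parallel n$ for some prime $p$ and some $a\in A_{m-1}$, and any such $a$ satisfies $a\ge\mm$. The inclusion ``$\supseteq$'' is then immediate, since $3\m$ and $5\m$ both have $2$-adic valuation exactly $\mm\in A_{m-1}$; so the whole content of the lemma is that $A_m$ has no further element $\le 5\m$.

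Accordingly, let $n\in A_m$ with $\m<n\le 5\m$, and fix a witness $p^a\parallel n$, $a\in A_{m-1}$. First I would rule out $p\ge 3$: then $n\ge 3^{\mms}=\m\,(3/2)^{\mms}>5\m$, since $(3/2)^{\mms}\ge(3/2)^4>5$ (using $\mm=\min A_{m-1}\ge\min A_1=4$ for $m\ge 2$). Hence $p=2$, so $2^a\parallel n$ with $a\in A_{m-1}$. If $a=\mm$, then $n=\m r$ with $r$ odd, and $\m<\m r\le 5\m$ forces $r\in\{3,5\}$, i.e. $n\in\{3\m,5\m\}$. If $a>\mm$, then $a$ is a member of $A_{m-1}$ different from its least element, so $a\ge\min(A_{m-1}\setminus\{\mm\})$; here I would invoke the inductive hypothesis — the lemma applied to $m-1$, legitimate because $m-1>1$ — to conclude $a\ge 3\mm$, whence $n\ge 2^{3\mms}=\m^3>5\m$, as $\m\ge 16$. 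Both sub-cases give $n\in\{3\m,5\m\}$, closing the induction step.

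It remains to start the induction at $m=2$, and this base case genuinely needs its own argument, because the pattern already breaks one level down: $\min A_1=4$ but $\min(A_1\setminus\{4\})=8\ne 3\cdot 4$. For it I would use the explicit description $A_1=\{n\mid\mu(n)=0\}$ and rerun the argument with $m=2$: the witness prime must be $2$, since $3^4=81>80=5\circled2$; the exponent $a$ lies in $A_1$; and if $a>4$ then $a$ is a non-squarefree integer exceeding $4$, hence $a\ge 8$ and $n\ge 2^8=256>80$. So $a=4$ and $n\in\{48,80\}=\{3\circled2,5\circled2\}$, as wanted. The step I expect to be the genuine obstacle is precisely the lower bound on $a$ in the case $a>\mm$: it must be obtained through the inductive hypothesis and \emph{cannot} be produced by a direct divisibility argument, since that would amount to controlling the factorisation — squarefreeness, say — of integers like $\mm+1=2^{\mms}+1$, which is hopeless for large $m$. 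Everything else reduces to the elementary bounds $(3/2)^4>5$ and $\m\ge 16$.
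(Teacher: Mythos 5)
Your proof is correct and follows essentially the same route as the paper's: both arguments induct on $m$, split $A_m$ into the odd multiples of $\m$ and the remainder $B_m$, and bound $\min B_m$ from below by $\min\{3^{\mms}, 2^{\mms'}\}$ (your two cases $p\ge3$ and $p=2$, $a>\mm$ are exactly this inequality), with the inductive hypothesis supplying the value of $\mms'$. The only difference is that you carry the sharper bound $\min B_m > 5\m$ through the induction so as to obtain both equalities at once, where the paper proves only $\m' = 3\m$ and remarks that $\m'' = 5\m$ is analogous.
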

\begin{proof}
We prove only the first statement; the second can be proven in the same way.

It is enough to show that $\min B_m > 3\m$; let us prove this by induction. From the discussion above we have
$$
\min B_m \ge \min\{ 3^{\mms}, 2^{\mms'} \}.
$$
For $m=2$ one can evaluate
$$
\min B_2 \ge \min\{ 3^4, 2^8 \} = 81 > 3 \cdot \circled2 = 3\cdot16.
$$
Now let $m>1$, $\m' = 3\m$. Then
$$
\min B_{m+1} \ge \min \{ 3^{\ms}, 2^{3\ms} \}
= 3^{\ms} > 3\cdot 2^{\ms} = 3 \ellipsed{m+1}.
$$
\end{proof}

The asymptotic behaviour of $\taue$ has been studied in details by Wu and Smati in \cite{wu1995} and \cite{smati1997}. Now we are going to study $E^m\tau$ for $m>1$.

\begin{lemma}
For a fixed integer $m>1$ we have
\begin{equation}\label{eq:Emtau-zeta-expansion}
\sum_{n=1}^\infty {E^m\tau(n) \over n^s}
= {
	\zeta(s) \zeta(\m s) \zeta\bigl((2\m+1)s\bigr) \zeta(3\m s)
	\over
	\zeta\bigl( (\m+1) s \bigr) \zeta(2\m s)
	} H(s),
\end{equation}
where $H(s)$ converges absolutely for
$
\sigma > {1/ (3\m+1)}
$.
\end{lemma}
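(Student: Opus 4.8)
The plan is to compute the Dirichlet series via the Euler product, since $E^m\tau$ is multiplicative. First I would write
$$
\sum_{n=1}^\infty \frac{E^m\tau(n)}{n^s} = \prod_p \left( \sum_{a=0}^\infty \frac{E^m\tau(p^a)}{p^{as}} \right) = \prod_p \left( \sum_{a=0}^\infty \frac{E^{m-1}\tau(a)}{p^{as}} \right),
$$
using $(Ef)(p^a)=f(a)$ with $f=E^{m-1}\tau$. So the local factor depends only on the values of $E^{m-1}\tau$ on the integers, i.e. on the sequence $E^{m-1}\tau(0), E^{m-1}\tau(1), E^{m-1}\tau(2), \ldots$, where by convention $E^{m-1}\tau(0)=1$ (the empty product / value at $p^0$) and $E^{m-1}\tau(1)=1$. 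The key observation is that, by Lemma 2 (and the preceding discussion), $E^{m-1}\tau(a)=1$ for all $a$ except $a\in A_{m-1}$, and the smallest few elements of $A_{m-1}$ are $\mm = 2^{\mms[m-2]}$ — wait, more precisely the excerpt tells us $\min A_{m-1}=\mm$, the next is $3\mm$ wait — I must restate in the paper's own notation: the three smallest elements of $A_{m-1}$ are $\mm$, then $\mm'=3\mm$ ... no. Let me be careful: with the circled-number notation, $\min A_{m-1} = \mm$ (the ellipsed $m-1$), and by Lemma 2 applied at level $m-1$ its successors are $3\mm$ and $5\mm$ whenever $m-1>1$, i.e. $m>2$; and $\mm = \m / 2^{?}$ — rather, the relation is $\m = 2^{\mm}$, so $\mm = \log_2 \m$. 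To keep the write-up uniform I would simply call $\mm =: a_0$, so that $a_0, 3a_0, 5a_0$ are the three smallest indices $a>0$ with $E^{m-1}\tau(a)\ne 1$, and all other such indices are $\ge (a_0+1)$-ish — actually $\ge \min B_{m-1}$, which exceeds $5a_0$, but for the four-zeta expansion I only need them $\ge 6a_0$ or so.

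Here is the arithmetic core. Each local factor is
$$
L_p(s) := \sum_{a\ge 0} \frac{E^{m-1}\tau(a)}{p^{as}} = \frac{1}{1-p^{-s}} + \sum_{a\in A_{m-1}} \frac{E^{m-1}\tau(a)-1}{p^{as}},
$$
since adding and subtracting $1$ in each term with $a\in A_{m-1}$ turns the ``$1$''-part into the full geometric series $\sum_{a\ge 0} p^{-as}=(1-p^{-s})^{-1}$. Now I want to peel off zeta factors. Note $E^{m-1}\tau(a_0)=\tau(a_0/?)$ — concretely $E^{m-1}\tau$ at its minimal nontrivial argument equals $2$: indeed $a_0=\mm$ and $E^{m-1}\tau(\mm)=E^{m-2}\tau(\text{something})=\cdots=\tau(\text{something})$; chasing the recursion, the value at the minimum is always $2$ because at each stage the minimal argument is a prime power $p^{(\text{previous min})}$ contributing the previous minimal value, and at the bottom $\tau(2)=2$. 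Let me not over-claim and instead say: write $c := E^{m-1}\tau(a_0)$, $c' := E^{m-1}\tau(3a_0)$, $c'' := E^{m-1}\tau(5a_0)$, and observe $c=2$, $c'=c''=2$ (this follows from the structure $A_{m-1}\ni a_0 \Rightarrow$ value $2$, and $3a_0=3\cdot 2^{\cdots}$ which is $p^{a_0}$ with $p=2$ times an odd part... actually $3a_0$ need not be such; safer: $3a_0 \in A_{m-1}$ because $a_0\in A_{m-1}$ and $3a_0=(2\cdot1+1)a_0$, and $E^{m-1}\tau$ is constant $=2$ on the coset $(2\N+1)a_0$). So
$$
\sum_{a\in A_{m-1}} \frac{E^{m-1}\tau(a)-1}{p^{as}} = \frac{1}{p^{a_0 s}} + \frac{1}{p^{(2a_0+1)s}}\cdot(\text{first term of the }B\text{ part, which is }E^{m-1}\tau(2a_0+1)-1)?
$$
Hmm — I must be more careful about which small exponents actually appear. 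The cleanest route: factor out $\zeta(s)$ entirely (already done), then among the remaining sum the leading terms are $p^{-a_0 s}$ (coefficient $1$, from $a=a_0$), $p^{-(a_0+1)s}$ — here I need to check: is $a_0+1\in A_{m-1}$? No, because $a_0+1$ is odd$+1$... I realize pinning the exact first several exponents is exactly where the stated zeta-quotient $\zeta(\m s)\zeta((2\m+1)s)\zeta(3\m s)/(\zeta((\m+1)s)\zeta(2\m s))$ comes from, and it matches $\m$ (circled $m$) $= 2^{\mm}$, NOT $a_0=\mm$ — so the exponents in (3) are written in terms of level-$m$ minima, i.e. $\m$, and indeed $\m = 2\cdot$? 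No: $\m' = 3\m$ and these appear as... wait, (3) has $\m s$, $(2\m+1)s$, $3\m s$ upstairs and $(\m+1)s$, $2\m s$ downstairs. That is exactly the pattern one gets from $\prod_p(1 + p^{-\m s} + p^{-(2\m+1)s}+\cdots)$-type factor: the first bump at $\m$, and the combinatorics $(1-p^{-\m s})^{-1}(1-p^{-(2\m+1)s})^{-1}(1-p^{-3\m s})^{-1}(1-p^{-(\m+1)s})(1-p^{-2\m s})$ expanded gives $1+p^{-\m s}+p^{-(2\m+1)s}+p^{-3\m s}-p^{-(\m+1)s}-p^{-2\m s}+\cdots$ and we'd need $L_p$ to agree with this through order $p^{-3\m s}$ (or slightly beyond), absorbing the discrepancy into $H(s)$.

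So the actual plan is: (i) establish $\prod_p L_p(s) = \zeta(s)\cdot\prod_p\bigl(1+\sum_{a\in A_{m-1},\,a\ge 1}(E^{m-1}\tau(a)-1)p^{-as}\bigr)$; (ii) identify, using Lemma 2 at level $m-1$ together with $\m=2^{\mm}$, the exponents and coefficients of the first several terms of the inner factor — I expect the exponents $\m,\ \m+1,\ 2\m,\ 2\m+1,\ 3\m,\dots$ to arise, with coefficients $+1,-1,-1,+1,+1,\dots$ matching the Taylor expansion of the target zeta quotient (the $-p^{-(\m+1)s}$ and $-p^{-2\m s}$ being ``correction'' terms that make the product telescope cleanly into the stated form); (iii) define $H(s)$ as the ratio of $\prod_p L_p(s)$ to the asserted four-over-two zeta quotient, and show $\log H(s) = \sum_p \log(\cdots)$ has its first surviving term of order $p^{-(3\m+1)s}$ or smaller, whence $H$ converges absolutely for $\sigma > 1/(3\m+1)$; here I would invoke Lemma 1 ($E^{m-1}\tau(n)\ll n^\eps$) to bound the tail $\sum_{a\ge 3\m+1}$ uniformly. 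The main obstacle is step (ii): carefully cataloguing which exponents $a\le 3\m$ (and with what coefficients) actually occur in $\sum_{a\in A_{m-1}}(E^{m-1}\tau(a)-1)p^{-as}$, i.e. translating the level-$(m-1)$ combinatorics of $A_{m-1}$ into the level-$m$ exponents. Concretely I must check that $A_{m-1}\cap[1,3\m]$ consists exactly of the right arguments (coming from $a_0=\mm$ and its odd multiples $3a_0, 5a_0$ — and note $2a_0 = 2\mm$ is not an odd multiple, so it enters via $B_{m-1}$, but $\min B_{m-1} > 3\mm$ might still be $\le 3\m = 3\cdot 2^{\mm}$, so I need $2\mm$-type terms NOT to appear below $3\m$, which is where the bound $\min B_{m-1}\ge\min\{3^{\mms[m-2]},2^{\mms'[m-2]}\}$ does the work) and that the values $E^{m-1}\tau$ takes there are all $2$, so the coefficients $E^{m-1}\tau(a)-1$ are all $1$. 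Once the exponents are pinned down as $\{\mm,\ 3\mm,\ 5\mm,\dots\}$ at level $m-1$, the passage $p^a \mapsto$ local factor at level $m$ reindexes these to give precisely $\m, \m+1, 2\m, 2\m+1, 3\m$ as the leading exponents of the assembled product — I should double-check this reindexing arithmetic, as it is the one genuinely delicate computation, and everything else is the routine ``Euler product $=$ zeta quotient times nice tail'' bookkeeping.
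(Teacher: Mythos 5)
Your overall strategy --- expand the Euler product, strip the geometric series off each local factor, match the first few correction terms against the expansion of the asserted zeta quotient, and dump the remainder into $H(s)$ --- is exactly the paper's strategy. The paper writes $S(x)=\sum_{n\ge0}E^m\tau(p^n)x^n=\sum_{n\ge0}x^n+x^{\ms}+x^{3\ms}+O(x^{5\ms})$ and then multiplies successively by $(1-x)$, $(1-x^{\ms})$, $(1-x^{\ms+1})^{-1}$, $(1-x^{2\ms})^{-1}$, $(1-x^{2\ms+1})$, $(1-x^{3\ms})$ until only $1+O(x^{3\ms+1})$ remains; that telescoping is precisely the content of your steps (ii)--(iii), which you describe but never carry out. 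So as written the proposal is not a proof: the one step you yourself single out as ``the genuinely delicate computation'' --- pinning down the exponents and coefficients of the local factor up to order $3\m$ --- is exactly the step that is absent.

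More importantly, the ``reindexing'' you are hoping will reconcile your exponents with the statement's does not exist. Your own computation is the correct one: the corrections to the local factor $\sum_{a\ge0}E^{m-1}\tau(a)p^{-as}$ sit at the exponents $a\in A_{m-1}$, whose least elements are $\mm$, $3\mm$, $5\mm$ (and even this pattern requires $m>2$: for $m=2$ one has $A_1=\{4,8,9,12,\dots\}$, whose second element is $2\cdot4$, not $3\cdot4$, so that case needs separate treatment). Each exponent $a$ of the local factor passes \emph{directly} into the argument of a zeta factor, since $\prod_p\bigl(1+p^{-as}+\cdots\bigr)=\zeta(as)\cdots$; nothing in the assembly of the global Dirichlet series converts $a=\mm$ into $2^{\mm}=\m$. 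An honest completion of your argument therefore produces the displayed identity with $\m$ replaced throughout by $\mm$ (and $H$ absolutely convergent for $\sigma>1/(3\mm+1)$), not the identity as stated; the tension you detected between your $a_0=\mm$ and the statement's $\m$ is real and must be confronted, not deferred. (The paper's own proof sidesteps it by asserting that the first corrections of $S(x)$ occur at $x^{\ms}$ and $x^{3\ms}$, i.e.\ by reading off the least elements of $A_m$ where those of $A_{m-1}$ are required.) To turn your sketch into a proof you would need to (a) commit to the exponents your Euler-product computation actually yields, (b) verify that the coefficients there are all $1$, i.e.\ that $E^{m-1}\tau$ equals $2$ at $\mm$, $3\mm$, $5\mm$, (c) dispose of $m=2$ separately, and (d) perform the telescoping explicitly.
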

\begin{proof}
Taking into account \eqref{eq:m'-m''} and
$E^m\tau(\m) = E^m\tau(3\m) = 2$, then
$$
S(x):= \sum_{n\ge0} E^m\tau(p^n) x^n
= \sum_{n\ge0} x^n + x^\ms + x^{3\ms} + \OO(x^{5\ms}),
$$
\begin{align*}
(1-x)S(x) &= 1+x^\ms-x^{\ms+1}+x^{3\ms}+\OO(x^{3\ms+1}),
\\
(1-x)(1-x^m)S(x) &= 1 - x^{\ms+1} - x^{2\ms} + x^{2\ms+1} + x^{3\ms} +\OO(x^{3\ms+1})
\\
{(1-x)(1-x^\ms) \over 1-x^{\ms+1}} S(x)
&= 1-x^{2\ms}+x^{2\ms+1}+x^{3\ms} +\OO(x^{3\ms+1})
\\
{(1-x)(1-x^\ms) \over (1-x^{\ms+1})(1-x^{2\ms})} S(x)
&= 1+x^{2\ms+1}+x^{3\ms} +\OO(x^{3\ms+1})
\end{align*}
So
$$
{(1-x)(1-x^\ms)(1-x^{2\ms+1})(1-x^{3\ms}) \over (1-x^{\ms+1})(1-x^{2\ms})} S(x)
= 1+\OO(x^{3\ms+1})
$$
and this implies \eqref{eq:Emtau-zeta-expansion}.
\end{proof}

\begin{theorem}
For a fixed integer $m>1$ we have
\begin{equation}\label{eq:sum-of-Emtau}
\sum_{n\le x} E^m\tau(n) = A_m x + B_m x^{1/\ms} + \OO(x^{\alpha_m}),
\quad
\alpha_m = 1/(\m+1),
\end{equation}
where $A_m$ and $B_m$ are computable constants.
\end{theorem}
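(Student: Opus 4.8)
The plan is to use the Dirichlet series factorization from the previous lemma together with the known estimates for the error terms $\Delta(a_1,\dots,a_k;x)$ of multidimensional divisor problems. Write $m=\m$ (i.e.\ $\m = 2^{\mms}$) for brevity. From \eqref{eq:Emtau-zeta-expansion} we have, in the region $\sigma > 1/(3\m+1)$,
$$
\sum_{n=1}^\infty \frac{E^m\tau(n)}{n^s}
= \frac{\zeta(s)\,\zeta(\m s)\,\zeta\bigl((2\m+1)s\bigr)\,\zeta(3\m s)}{\zeta\bigl((\m+1)s\bigr)\,\zeta(2\m s)}\,H(s).
$$
Interpreting each $\zeta(as)$ as the Dirichlet series of the indicator of $a$-th powers and each $1/\zeta(bs)$ via $\mu$, the numerator $\zeta(s)\zeta(\m s)\zeta((2\m+1)s)\zeta(3\m s)$ corresponds to the arithmetic function whose summatory function is, by definition, the partial sums of $\tau(1,\m,2\m+1,3\m;\cdot)$. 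So I would write
$$
E^m\tau = \tau(1,\m,2\m+1,3\m;\cdot) \star g,
$$
where $g$ is the arithmetic function with Dirichlet series $H(s)/\bigl(\zeta((\m+1)s)\zeta(2\m s)\bigr)$; since that series converges absolutely for $\sigma > 1/(2\m)$ (indeed the pole of $H$ is further left), $g(n) \ll n^{\eps}$ and $\sum_{n>y}|g(n)|\,n^{-1/(2\m)+\eps}$ converges, in fact $\sum_{n\le y}|g(n)| \ll y^{1/(2\m)+\eps}$.

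Next I would carry out the standard convolution (hyperbola-type) argument:
$$
\sum_{n\le x} E^m\tau(n) = \sum_{d} g(d) \sum_{e \le x/d} \tau(1,\m,2\m+1,3\m;e)
= \sum_d g(d)\Bigl( M(x/d) + \Delta(1,\m,2\m+1,3\m; x/d)\Bigr),
$$
where $M(y)$ is the main term for $\sum_{e\le y}\tau(1,\m,2\m+1,3\m;e)$. The main term $M(y)$ is a sum of a linear term $c_1 y$, a term $c_2 y^{1/\m}$, and lower-order terms $y^{1/(2\m+1)}$, $y^{1/(3\m)}$ coming from the four zeta-factors; summing $\sum_d g(d) M(x/d)$ produces the claimed $A_m x + B_m x^{1/\m}$ with the convergent constants $A_m = \sum_d g(d)/d \cdot c_1$, $B_m = \sum_d g(d) d^{-1/\m}\cdot c_2$ (the series converge since $\sum |g(d)| d^{-1/\m}$ converges as $1/\m < 1/(2\m)$... more precisely because $H(s)/\zeta((\m+1)s)\zeta(2\m s)$ is analytic well past $\sigma = 1/\m$ except we must check $\m \ge 2$ so $1/\m \le 1/2 < 1$, and the critical comparison is with the abscissa $1/(2\m)$, hence I must truncate the $d$-sum at $x^{\delta}$ and estimate the tail using partial summation). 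I would split the $d$-sum at a parameter $D$ to be optimized; for $d\le D$ use the asymptotic, for $d>D$ bound trivially using $\sum_{e\le y}\tau(1,\m,\dots) \ll y^{1+\eps}$... and here the delicate point arises.

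The error contribution is $\sum_{d\le D} |g(d)| \cdot (x/d)^{\theta(1,\m,2\m+1,3\m)+\eps} + (\text{tail})$. So the heart of the matter is the bound $\theta(1,\m,2\m+1,3\m) \le 1/(\m+1)$, i.e.\ $\Delta(1,\m,2\m+1,3\m;x) \ll x^{1/(\m+1)+\eps}$. \textbf{This is the main obstacle.} I would derive it from the general theory of the divisor problem $\tau(a_1,\dots,a_k;\cdot)$ (K\"ratzel~\cite{kratzel1988}): writing the four-term exponent vector $(1,\m,2\m+1,3\m)$, the relevant upper bound for $\theta$ is $\max$ over the usual competing quantities — essentially $\theta \le (k-1)/(a_1+\dots+a_k)$-type bounds interact with the individual exponents, and one checks that for $a_1=1$ the dominant term is governed by $1/(1+\m)$ coming from pairing the first two exponents, the contributions of the exponents $2\m+1$ and $3\m$ being $\le 1/(2\m+1) < 1/(\m+1)$ and hence absorbed. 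Concretely: $\tau(1,\m,2\m+1,3\m;\cdot)$ is $\tau(1,\m;\cdot)$ convolved with the indicator of $(2\m+1)$-th powers convolved with the indicator of $(3\m)$-th powers; since $\Delta(1,\m;x) \ll x^{1/(\m+1)+\eps}$ (this is the classical Piltz-type estimate $\sum_{n\le x}\tau(1,b;n) = \zeta(1/b) x + \zeta(b)x^{1/b} + O(x^{1/(b+1)})$, valid for $b\ge 2$), and convolving with a sparse function supported on $(2\m+1)$-th powers (whose summatory function is $\ll x^{1/(2\m+1)}$, well below $x^{1/(\m+1)}$) perturbs neither the main terms' shape nor the error order. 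Thus the whole construction collapses to: (i) the two-dimensional asymmetric divisor estimate with exponents $1$ and $\m$, (ii) trivial control of the two sparse ``power'' factors, (iii) the convolution with $g$ where $\sum|g(d)|d^{-1/(\m+1)}$... must converge, which holds because $H(s)/\zeta((\m+1)s)\zeta(2\m s)$ is analytic for $\sigma > 1/(2\m) \ge 1/(\m+1)$ when... wait, $1/(2\m) < 1/(\m+1)$ for $\m\ge 2$, so actually the $g$-series does converge at $s = 1/(\m+1)$ and no truncation subtlety remains; the error term is simply $\sum_d |g(d)| (x/d)^{1/(\m+1)+\eps} \ll x^{1/(\m+1)+\eps}$. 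Collecting the main terms from $\sum_d g(d) M(x/d)$ as two convergent Dirichlet-series values gives $A_m, B_m$, and the proof is complete.
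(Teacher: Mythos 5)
Your proposal is correct and follows essentially the same route as the paper: both reduce the problem via the Dirichlet-series factorization to the two-dimensional estimate $\Delta(1,\m;x)\ll x^{1/(\m+1)}$ (Kr\"atzel, Th.~5.1) and finish by the convolution method, the only difference being that you keep $\zeta\bigl((2\m+1)s\bigr)\zeta(3\m s)$ on the divisor-function side before observing that these sparse factors are harmless, whereas the paper folds them into the convolving factor from the start. Two cosmetic slips: the main term of $\sum_{n\le x}\tau(1,\m;n)$ is $\zeta(\m)x+\zeta(1/\m)x^{1/\ms}$ (you swapped the constants), and the abscissa of absolute convergence of your $g$ is $1/(\m+1)$ rather than $1/(2\m)$ because of the factor $\zeta^{-1}\bigl((\m+1)s\bigr)$ --- which is exactly why the $d$-sum in the error term converges only after inserting the $\eps$.
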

\begin{proof}
Follows from \eqref{eq:Emtau-zeta-expansion}, a classic estimate from the book of Krätzel~\cite[Th.~5.1]{kratzel1988}
\begin{equation}\label{eq:tau1m-estimate}
\sum_{n\le x} \tau(1, \m; n)
= \zeta(\m) x + \zeta(1/\m) x^{1/\ms} + \OO(x^{\theta(1,\ms)}),
\quad
\theta(1,\m) = 1/(\m+1)
\end{equation}
and the convolution method.
\end{proof}

We can improve the error term in \eqref{eq:sum-of-Emtau} under RH. First of all we should estimate~\eqref{eq:tau1m-estimate} more precisely. Using an exponent pair $A^{\mms-1}B(0,1)$, where~$A$ and~$B$ are van der Corput's processes, one can obtain $\theta(1,\m) \hm= 1/(\m\hm+\mm)$. The following lemma provides even better result by sophisticated selection of the exponent pair.

\begin{lemma}\label{l:sum-of-tau1m-precise}
For a fixed integer $r\ge5$ we have
$$
\theta(1,2^r) = {2^r-2r \over 2^{2r} - r\cdot2^r - 2r^2 + 2r - 4}
< {1\over 2^r+r}.
$$
\end{lemma}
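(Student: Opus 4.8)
The plan is to estimate $\sum_{n\le x}\tau(1,2^r;n)$ by the Dirichlet hyperbola-type argument: write the sum as $\sum_{d^{2^r}e\le x}1=\sum_{d\le x^{1/2^r}}\lfloor x/d^{2^r}\rfloor$, split at a parameter $y$, and reduce the error to bounding the exponential sum $\sum_{y<d\le x^{1/2^r}}\psi(x/d^{2^r})$ where $\psi(u)=u-\lfloor u\rfloor-1/2$. By a standard truncated Fourier expansion of $\psi$ and van der Corput's method of exponent pairs, a sum of this shape with the smooth phase $h\,x/d^{2^r}$ is $\ll (HX)^{\kappa} (x^{1/2^r})^{\lambda-\kappa}$-type quantities for an exponent pair $(\kappa,\lambda)$ applied to the $r$th-derivative phase; balancing against the truncation term $x/y^{2^r}$ and the trivial contribution yields $\theta(1,2^r)$ as the solution of a linear optimisation whose value depends on $(\kappa,\lambda)$.

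First I would recall precisely how exponent pairs transform under the van der Corput $A$ and $B$ processes, and record that for the monomial phase of ``degree'' $2^r$ the relevant exponent pair governs the bound $\sum_{d\sim D}e(t/d^{2^r})\ll t^{\kappa}D^{\lambda-\kappa(2^r)+1}$ after the appropriate normalisation. The paper already notes that the naive choice $A^{r-1}B(0,1)$ gives the weaker $\theta=1/(2^r+2^r-1)$; the point of the lemma is that a smarter pair does better. I would therefore take the exponent pair of the form $ABA^{j}B\cdots$ — concretely, I expect the optimal choice to be something like $A^{\,r-2}BA B(0,1)$ or an $AB\!\cdots$ combination producing a pair $(\kappa,\lambda)$ with $\kappa$ of order $1/2^r$ and $\lambda$ close to $1$, whose precise rational values, when substituted into the balancing equation, produce exactly the stated rational function of $r$. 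The quantity $2^{2r}-r2^r-2r^2+2r-4$ in the denominator strongly suggests a pair whose coordinates have denominators linear in $r$ (coming from iterating $A$ about $r$ times) combined once with a $B$-type step, so I would reverse-engineer $(\kappa,\lambda)$ from the target formula and then verify it is attainable.

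Concretely the steps are: (1) set up the hyperbola decomposition and isolate the $\psi$-sum over the dyadic ranges $d\sim D$ with $D\le x^{1/2^r}$; (2) apply the chosen exponent pair to each dyadic block, getting a bound of the form $x^{\kappa}D^{e}+\text{(lower order)}$ with $e$ an explicit linear function of $r$, $\kappa$, $\lambda$; (3) sum over dyadic $D\le y$ and over $y<D\le x^{1/2^r}$, add the truncation error $x^{1/2^r}/?$ ... more precisely the tail $\sum_{d>y}x/d^{2^r}\ll x/y^{2^r-1}$ and the trivial term from $d$ near $x^{1/2^r}$; (4) choose $y$ as a power of $x$ to equalise the competing terms, solving a $2\times2$ linear system whose solution is the exponent in the statement; (5) check the numerical inequality $\theta(1,2^r)<1/(2^r+r)$ for $r\ge5$ by comparing the two rational functions, which reduces to a polynomial inequality in $r$ that is routine for $r\ge5$ (and genuinely fails or is irrelevant for small $r$, explaining the hypothesis).

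The main obstacle is step (2): identifying the exact exponent pair that yields the stated denominator and proving it lies in the closure of the set of exponent pairs generated by the $A$ and $B$ processes from $(0,1)$. Once the right pair $(\kappa,\lambda)$ is in hand the rest is bookkeeping, but pinning it down requires either a clever ansatz or tracing through the $A$/$B$ recursion $\kappa\mapsto\kappa/(2\kappa+2)$, $\lambda\mapsto(\kappa+\lambda+1)/(2\kappa+2)$ and $(\kappa,\lambda)\mapsto(\lambda-\tfrac12,\kappa+\tfrac12)$ for roughly $r$ steps and matching the output to the target; I expect $A^{r-2}BAB(0,1)$ (or a close variant) to be the pair, and verifying this — together with confirming that the optimisation indeed selects this block rather than the main term or the tail — is where the real work lies. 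The threshold $r\ge5$ will emerge precisely as the condition under which this pair beats the simpler alternatives and the final inequality holds.
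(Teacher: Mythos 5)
Your overall framework (hyperbola decomposition, $\psi$-sum, exponent pairs, balancing) is the standard machinery that the paper simply imports wholesale as Kr\"atzel's Theorem~5.11, so steps (1)--(4) of your plan are not in dispute; they reduce the lemma to exhibiting an exponent pair $(k,l)$ with the right properties and plugging it into the formula $\theta(1,2^r)=k/(2^rk-l+1)$ (after verifying which case of that theorem applies, i.e.\ checking a sign condition such as $2l-2\cdot2^rk-1<0$ --- a step you gesture at but do not carry out). The genuine gap is exactly the point you yourself flag as ``where the real work lies'': you never produce the pair. Your ansatz $A^{r-2}BAB(0,1)$ is not the right one; the paper uses $A^{r-1}BA^{r-3}BAB(0,1)$, a word with \emph{two} long blocks of $A$-iterations of lengths $r-1$ and $r-3$. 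Reverse-engineering the pair from the target denominator $2^{2r}-r\cdot2^r-2r^2+2r-4$ is a reasonable heuristic, but without actually naming and verifying a pair in the $A,B$-semigroup the lemma is not proved.

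There is a second, more technical omission. Even once the correct word in $A$ and $B$ is guessed, evaluating it in closed form for \emph{general} $r$ is nontrivial: naively ``tracing through the recursion for roughly $r$ steps'' does not yield the explicit rational functions $k_r$, $l_r$ of $r$ that the statement requires. The paper handles this by lifting exponent pairs to the real projective plane, where the $A$-process becomes a $3\times3$ matrix ${\mathcal A}$ conjugate to an upper-triangular matrix, so that ${\mathcal A}^{n}$ has a closed form and the whole word $ {\mathcal A}^{r-1}{\mathcal B}{\mathcal A}^{r-3}{\mathcal B}{\mathcal A}{\mathcal B}$ can be multiplied out symbolically. Some device of this kind is indispensable for a statement uniform in $r$, and your proposal contains no substitute for it. As it stands, the proposal is a correct reduction of the lemma to its hardest ingredient, together with an incorrect guess at that ingredient.
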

\begin{proof}
Consider an exponent pair
$$
(k_r, l_r) := A^{r-1} B A^{r-3} BAB (0,1).
$$
To evaluate $k_r$ and $l_r$ we map exponent pairs into the real projective space (the concept of such mapping traces back to Graham \cite{graham1986}):
$$
\R^2 \ni (k,l) \mapsto (k:l:1) \in \R^3/(\R\setminus\{0\}).
$$
We have
$$
A(k,l) \mapsto {\mathcal A} (k:l:1),
\qquad
{\mathcal A} = \begin{pmatrix} 1&0&0 \\ 1&1&1 \\ 2&0&2 \end{pmatrix} = {\mathcal S} \begin{pmatrix} 1&1&0 \\ 0&1&0 \\ 0&0&2 \end{pmatrix} {\mathcal S}^{-1},
$$
where
$
{\mathcal S} = \left(\begin{smallmatrix} 0&-1&0 \\ 1&0&1 \\ 0&2&1  \end{smallmatrix}\right)
$,
and
$$
B(k,l) \mapsto {\mathcal B}(k:l:1),
\qquad
{\mathcal B} = \begin{pmatrix} 0&2&-1 \\ 2&0&1 \\ 0&0&2 \end{pmatrix}.
$$
Thus
$
{\mathcal A}^n = {\mathcal S} \left( \begin{smallmatrix} 1&n&0 \\ 0&1&0 \\ 0&0&2^n \end{smallmatrix} \right) {\mathcal S}^{-1}
$
and
$$
{\mathcal A}^{r-1} {\mathcal B} {\mathcal A}^{r-3} {\mathcal B} {\mathcal A} {\mathcal B} (0:1:1)
=
\begin{pmatrix}
4\cdot 2^r - 8r \\
8\cdot 2^{2r} - (12r+16) \cdot 2^r + 8r^2+8r+16 \\
8\cdot 2^{2r} - (8r+16)  \cdot 2^r + 16r
\end{pmatrix}.
$$
Returning to $\R^2$ we get
$$
k_r = {2^r-2r \over 2\cdot 2^{2r}-(2r+4)\cdot 2^r + 4 r},
\quad
l_r = 1 - { r\cdot 2^r - 2r^2+2r-4 \over 2\cdot 2^{2r}-(2r+4)\cdot 2^r + 4 r}.
$$
Now for $r\ge5$
$$
2l_r - 2\cdot2^r k_r - 1 = {2r^2+4-2\cdot 2^r \over 2^{2r}-(r+2)\cdot 2^r + 2r} < 0.
$$
This proves that $(k_r, l_r)$ satisfies the second case of \cite[Th. 5.11]{kratzel1988} and finally
$$
\theta(1,2^r) = {k_r \over 2^rk_r - l_r + 1}.
$$
\end{proof}

In special cases the value of $\theta(1,\m)$ in \eqref{eq:sum-of-Emtau} can be estimated even more precisely. The case of $m=0$ is classic, the best modern result is by Graham and Kolesnik \cite{graham1988}. For bigger $m$ we calculated estimates using \cite[Th.~5.11, Th.~5.12]{kratzel1988}, selecting appropriate exponent pair using the method described in~\cite{lelechenko2013-acta}; see Table \ref{tbl:tau1m-precise}.

\begin{table}[h]
\begin{tabular}{@{}r|r|r|c@{}}
$m$ & $\m$ & $\theta(1,\m)$  & Exp. pair or ref.\\\hline
0 &  2  & $1057/4785+\eps               \approx 0.220899$  & Graham and Kolesnik \cite{graham1988} \\\hline
1 &  4  & $1448/10331+\eps               \approx 0.140161$  & $A H_{05}$ \\ \hline
2 & 16  & $15/307   \approx 0.048860$ &  $A^3BA^2BA^4B I$ \\
\end{tabular}
\smallskip
\caption{Special values of $\theta(1,\cdot)$. Exponent pairs are written in terms of  $A$- and $B$-processes. Here~$I=(0,1)$ and~$H_{05}=(32/205\hm+\eps, 269/410\hm+\eps)$ is an exponent pair from~\cite{huxley2005}.}
\label{tbl:tau1m-precise}
\end{table}

\begin{theorem}\label{th:Emtau-under-RH}
Under RH for a fixed $m>1$
\begin{equation}\label{eq:sum-of-Emtau-under-RH-exponent}
\alpha_m  = {1-\theta(1,\m) \over \m+2-2(\m+1)\theta(1,\m)},
\end{equation}
where $\alpha_m$ is an exponent from \eqref{eq:sum-of-Emtau}.
Note that if $\theta(1,\m) < 1/(\m+1)$ then~$\alpha_m \hm< 1/(\m+1)$ too.
\end{theorem}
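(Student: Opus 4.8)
The plan is to exploit the factorization underlying Lemma~3: we have $E^m\tau = \tau(1,\m;\cdot)\star g$, where $g$ is the prime-independent multiplicative function with Dirichlet series
\[
G(s)=\sum_{n=1}^\infty\frac{g(n)}{n^s}=\frac{\zeta((2\m+1)s)\,\zeta(3\m s)}{\zeta((\m+1)s)\,\zeta(2\m s)}\,H(s).
\]
Unconditionally $G$ can be controlled only for $\sigma>1/(\m+1)$ — the abscissa of absolute convergence forced by $1/\zeta((\m+1)s)$ — which is precisely why Theorem~1 stops at the exponent $1/(\m+1)$. Under RH, however, $1/\zeta((\m+1)s)$ extends holomorphically, with polynomial growth on vertical lines, to $\sigma>1/(2\m+2)$, and $1/\zeta(2\m s)$ to $\sigma>1/(4\m)$; since $1/(2\m+2)$ is the largest of $1/(2\m+2)$, $1/(4\m)$, $1/(3\m+1)$ and also exceeds $1/(3\m)$, the function $G(s)$ is holomorphic in $\sigma>1/(2\m+2)$ save for the simple pole at $s=1/(2\m+1)$. (We also use $E^m\tau(n)\ll n^\eps$, which follows by iterating Lemma~1, so that Perron-type remainders are harmless.)

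By truncated Perron and a shift of the contour past that single pole, I would obtain
\[
\tilde g(t):=\sum_{n\le t}g(n)=c_0\,t^{1/(2\m+1)}+O\bigl(t^{1/(2\m+2)+\eps}\bigr),
\]
and, by partial summation, the tails $\sum_{d>D}g(d)/d$ and $\sum_{d>D}g(d)/d^{1/\m}$ with explicit leading parts of orders $D^{1/(2\m+1)-1}$, $D^{1/(2\m+1)-1/\m}$ and remainders of the same quality. Then I would run the Dirichlet hyperbola method with a parameter $D$:
\[
\sum_{n\le x}E^m\tau(n)=\sum_{d\le D}g(d)\,F(x/d)+\sum_{b\le x/D}\tau(1,\m;b)\bigl(\tilde g(x/b)-\tilde g(D)\bigr),
\]
where $F(t):=\sum_{b\le t}\tau(1,\m;b)=\zeta(\m)t+\zeta(1/\m)t^{1/\m}+O(t^{\theta+\eps})$ by \eqref{eq:tau1m-estimate}, with $\theta:=\theta(1,\m)$. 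Inserting both expansions, the $\zeta(\m)(x/d)$ and $\zeta(1/\m)(x/d)^{1/\m}$ contributions reconstruct $A_mx+B_mx^{1/\m}$; the delicate point is that the apparent secondary contributions — of orders $xD^{1/(2\m+1)-1}$ and $x^{1/\m}D^{1/(2\m+1)-1/\m}$, coming from the tails of $g$ on the one hand and from $\tilde g(D)$, $\tilde g(x/b)$ on the other — cancel identically between the two sums, while the genuinely present term of size $x^{1/(2\m+1)}$ (the pole of $G$ at $1/(2\m+1)<\alpha_m$) is absorbed into the error. What survives is an error $\ll xD^{1/(2\m+2)-1+\eps}$ (with a dominated $x^{1/\m}$-analogue) from the remainder of $\tilde g$, plus the sum $\Sigma:=\sum_{d\le D}g(d)\,\Delta(1,\m;x/d)$ from the remainder of $F$.

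The main obstacle is to estimate $\Sigma$ with enough precision. The trivial bound $\Sigma\ll x^{\theta+\eps}\sum_{d\le D}|g(d)|d^{-\theta}\ll x^{\theta+\eps}D^{1/(\m+1)-\theta+\eps}$ — the exponent $1/(\m+1)$ again coming from the $(\m+1)$st-power support induced by the $1/\zeta((\m+1)s)$ factor — when balanced against $xD^{1/(2\m+2)-1+\eps}$ only reproduces $\alpha_m=1/(\m+1)$, i.e.\ no gain over Theorem~1. To do better one must use the oscillation of $\Delta(1,\m;\cdot)$: substitute its truncated Voronoi-type expansion, interchange summations, and bound the resulting exponential sums (twisted by the Möbius-supported part of $g$) by van der Corput's processes / a suitable exponent pair, optimizing the truncation length — an RH-conditional analysis in the spirit of \cite{cao2010}. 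With $\Sigma$ so estimated, choosing $D$ as the power of $x$ that equalizes the surviving error terms yields $\alpha_m=(1-\theta)/(\m+2-2(\m+1)\theta)$. Finally, the denominator is positive (as $\theta<1/(\m+1)<(\m+2)/(2(\m+1))$) and $(1-\theta)(\m+1)<\m+2-2(\m+1)\theta$ exactly when $(\m+1)\theta<1$, so $\theta(1,\m)<1/(\m+1)$ forces $\alpha_m<1/(\m+1)$.
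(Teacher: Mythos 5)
There is a genuine gap at the heart of the argument: the sum $\Sigma=\sum_{d\le D}g(d)\,\Delta(1,\m;x/d)$ is never actually estimated. You correctly observe that the trivial bound on $\Sigma$ gives no improvement over Theorem~1, and you then defer to ``a truncated Voronoi-type expansion, interchange of summations, exponent pairs, \dots'' without carrying out any of it or showing that such an analysis would land on precisely the exponent $(1-\theta)/(\m+2-2(\m+1)\theta)$. Since everything before this point is routine, the one step you leave open is the entire content of the theorem. The same applies to the asserted ``identical cancellation'' of the secondary terms of sizes $xD^{1/(2\m+1)-1}$ and $x^{1/\m}D^{1/(2\m+1)-1/\m}$ between the two halves of the hyperbola identity, which is plausible but not verified.

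The underlying problem is the choice of decomposition. By lumping the whole cofactor of $\zeta(s)\zeta(\m s)$ into a single $g$ and running a generic hyperbola method, you force yourself to sum $\Delta(1,\m;x/d)$ over \emph{all} $d\le D$ in the (fairly dense) support of $g$, and then oscillation of $\Delta$ really is needed. The paper instead peels off only the factor $1/\zeta((\m+1)s)$, i.e.\ writes $E^m\tau=\tau(1,\m;\cdot)\star\mu_{\ms+1}\star h$ with $\mu_{\m+1}$ supported on $(\m+1)$-th powers and $h$ of abscissa $1/(2\m)<\alpha_m$, and invokes Nowak's RH-conditional theorem for $\tau(a,b;\cdot)\star\mu_c$ with $(a,b,c)=(1,\m,\m+1)$ (see also the paper's Lemma~\ref{l:nowak-generalized} for the mechanism). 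There the analogue of your $\Sigma$ is $\sum_{l\le y}\mu(l)\Delta(1,\m;x/l^{\m+1})\ll x^{\theta+\eps}y^{1-(\m+1)\theta}$ --- the \emph{trivial} bound suffices, because the convolving function lives on $(\m+1)$-th powers so the sum has only $y$ terms; RH enters solely through $\sum_{l>y}\mu(l)l^{-cs}\ll y^{1/2-\sigma+\eps}$, and balancing $x^{\theta}y^{1-(\m+1)\theta}$ against $x^{1/2}y^{-\m/2}$ gives exactly \eqref{eq:sum-of-Emtau-under-RH-exponent}. To repair your proof you should either adopt that decomposition or supply the missing exponential-sum analysis in full; as written, the claimed exponent is not derived. (Your closing verification that $\theta(1,\m)<1/(\m+1)$ implies $\alpha_m<1/(\m+1)$ is correct.)
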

\begin{proof}
By \eqref{eq:Emtau-zeta-expansion} we get
$$
E^m\tau = \tau(1,\m; \cdot) \star \mu_{\ms+1} \star h,
$$
where $\mu_k(n^k) = \mu(n)$ and $\mu_k(n)=0$ in other cases and $h(n)$ is the $n^{\text{th}}$ coefficient in Dirichlet series
$$
{
	\zeta\bigl((2\m+1)s\bigr) \zeta(3\m s)
	\over
	\zeta(2\m s)
	} H(s).
$$
We see that $\sum_{n\le x} h(n) n^{-s}$ converges absolutely for $\sigma > 1/2\m$.

Nowak (see \cite[Th. 2]{nowak1993} or Cao and Zhai \cite[Th. 1, Cor. 1.1]{cao2013}) proved that if~$a\hm\le b<c<2(a+b)$ and $\theta(a,b) < 1/c$, then under RH for~$f\hm=\tau(a,b; \cdot) \star \mu_c$ one have
\begin{equation}\label{eq:nowak-result}
\sum_{n\le x} f(n)
= {\zeta(b/a) \over \zeta(c/a)} x^{1/a}
+ {\zeta(a/b) \over \zeta(c/b)} x^{1/b}
+ \OO(x^{\beta+\eps}),
\end{equation}
where
$$
\beta = {1-a\theta(a,b) \over a+c-2ac\theta(a,b)}.
$$
Substitution $a=1$, $b=\m$, $c=\m+1$ gives us \eqref{eq:sum-of-Emtau-under-RH-exponent}.

The rest of the theorem can be proven by convolution method.
\end{proof}

We can also estimate the error term from the bottom.

\begin{theorem}
For a fixed integer $m>1$ we have
$$
\sum_{n\le x} E^m\tau(n) = A_m x + B_m x^{1/\ms} + \Omega(x^{1/2(\ms+1)}).
$$
\end{theorem}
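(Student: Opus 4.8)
The plan is to read off, from the Dirichlet series \eqref{eq:Emtau-zeta-expansion}, a singularity on the line $\Re s=\frac1{2(\m+1)}$ of the Mellin transform of the error term, and then to convert it into the claimed $\Omega$-bound by the classical Landau oscillation method. The exponent $\frac1{2(\m+1)}=\frac12\cdot\frac1{\m+1}$ is precisely the real part of $\rho/(\m+1)$ when $\rho$ is a zero of $\zeta$ on the critical line, and it is the factor $\zeta\bigl((\m+1)s\bigr)$ standing in the \emph{denominator} of \eqref{eq:Emtau-zeta-expansion} that supplies such a singularity.

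First I would put $\Delta(x):=\sum_{n\le x}E^m\tau(n)-A_mx-B_mx^{1/\m}$ with $A_m,B_m$ as in \eqref{eq:sum-of-Emtau}, and let $F(s)$ denote the series in \eqref{eq:Emtau-zeta-expansion}. By \eqref{eq:sum-of-Emtau} one has $\Delta(x)\ll x^{1/(\m+1)+\eps}$, so
$$
\Psi(s):=\int_1^\infty\Delta(x)\,x^{-s-1}\,dx=\frac{F(s)}s-\frac{A_m}{s-1}-\frac{B_m}{s-1/\m}
$$
converges absolutely, hence is holomorphic, for $\Re s>1/(\m+1)$, and by \eqref{eq:Emtau-zeta-expansion} it extends meromorphically to $\Re s>1/(3\m+1)$, the poles of $F(s)/s$ at $s=1$ and $s=1/\m$ being cancelled by the two subtracted main terms.

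Next I would check that $\Psi$ really has a pole on $\Re s=\frac1{2(\m+1)}$. Let $\rho_1=\frac12+i\gamma_1$ be the lowest nontrivial zero of $\zeta$ (it is simple), and put $s_1:=\rho_1/(\m+1)$, so $\Re s_1=\frac1{2(\m+1)}$. Then $\zeta\bigl((\m+1)s\bigr)^{-1}$ has a simple pole at $s_1$, and this persists as a pole of $F$ unless one of the remaining factors vanishes there. One verifies it does not: the points $s_1,\ \m s_1,\ (2\m+1)s_1,\ 2\m s_1,\ 3\m s_1$ all have imaginary part below $3\gamma_1$ and real part distinct from both $\frac12$ and $1$, hence $\zeta$ is finite and nonzero at each of them (the absence of off-critical-line zeros up to such a height is part of the numerically verified Riemann hypothesis); and $H(s_1)\ne0$, since $s_1$ lies in the half-plane $\sigma>1/(3\m+1)$ of absolute convergence of the Euler product defining $H$, whose factors are $1+O\bigl(p^{-(3\m+1)\Re s_1}\bigr)$ and hence nonvanishing once the finitely many smallest primes are checked. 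Thus $\Psi$ has a genuine simple pole at the non-real point $s_1$.

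Since $\Delta$ is real, absolute convergence of $\int_1^\infty\Delta(x)x^{-s-1}\,dx$ in a half-plane $\Re s>\frac1{2(\m+1)}-\delta$ is excluded by the singularity at $s_1$, and the classical oscillation theorem of Landau type (see Krätzel \cite{kratzel1988}) upgrades this to $\Delta(x)=\Omega_\pm\bigl(x^{1/(2(\m+1))}\bigr)$, which is the assertion. The point needing care — and the main obstacle — is that $\Psi$ generally carries further singularities strictly to the right of $\Re s=\frac1{2(\m+1)}$: already $\zeta\bigl((2\m+1)s\bigr)$ in the numerator produces a real pole at $s=1/(2\m+1)$, and if the Riemann hypothesis fails there may be additional poles from zeros of $\zeta\bigl((\m+1)s\bigr)$ or $\zeta(2\m s)$. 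One must therefore invoke the oscillation theorem in a form insensitive to this; that is harmless, because singularities to the right of $\frac1{2(\m+1)}$ only force oscillation of a \emph{larger} order — indeed the real pole at $s=1/(2\m+1)$ alone already yields $\Delta(x)=\Omega\bigl(x^{1/(2\m+1)-\eps}\bigr)$, a fortiori $\Omega\bigl(x^{1/(2(\m+1))}\bigr)$. Everything else, namely the residue bookkeeping at $s_1$ and the Tauberian step, is routine.
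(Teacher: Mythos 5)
Your argument is correct and is essentially the paper's own proof, which consists of the single observation that \eqref{eq:Emtau-zeta-expansion} has $\zeta\bigl((\m+1)s\bigr)$ in the denominator, so that the Dirichlet series has infinitely many poles on the line $\sigma=1/\bigl(2(\m+1)\bigr)$, combined implicitly with the standard Landau-type oscillation theorem. Your version merely makes explicit the Mellin-transform bookkeeping and the non-cancellation check that the paper leaves unstated.
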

\begin{proof}
Follows from \eqref{eq:Emtau-zeta-expansion}, which shows that Dirichlet series
$$\sum_{n=1}^\infty {E^m\tau(n) \over n^s}$$
has $\zeta\bigl( (\m+1) s \bigr)$ in the denominator and thus
has infinitely many poles at line~$\sigma\hm=1/2(\m+1)$.
\end{proof}

\section{Exponential multidimensional divisor functions}\label{s:theorem-t}

Consider a family of functions
$$
\tau^{(e)}, \tau_3^{(e)}, \tau_4^{(e)}\ldots
$$

In \cite{toth2007a} Tóth has proved the following general result.
\begin{theoremT}
Let $f$ be a complex valued multiplicative
arithmetic function such that

a) $f(p)=f(p^2)=\cdots =f(p^{\ell-1})=1$,
$f(p^{\ell})=f(p^{\ell+1})=k$ for every prime~$p$, where $\ell, k\ge
2$ are fixed integers and

b) there exist constants $C, m>0$ such that $|f(p^a)|\le Ca^m$ for
every prime~$p$ and every $a\ge \ell +2$.

Then for $s\in \mathbb{C}$

i) $$ F(s):= \sum_{n=1}^{\infty} \frac{f(n)}{n^s} =
\zeta(s)\zeta^{k-1}(\ell s) V(s), \qquad \Re s \hm> 1,
$$
where the Dirichlet series $V(s):=\sum_{n=1}^{\infty} \frac{v(n)}{n^s}$ is absolutely
convergent for~$\Re s > \frac1{\ell+2}$,

ii) $$\sum_{n\le x} f(n)= C_fx + x^{1/\ell} P_{f,k-2}(\log x)
+\OO(x^{u_{k,\ell}+\varepsilon}),
$$
for every $\varepsilon >0$, where $P_{f,k-2}$ is a polynomial of
degree $k-2$, $u_{k,\ell} \hm{:=} \frac{2k-1}{3+(2k-1)\ell}$ and
$$
C_f: =\prod_p \left(1 + \sum_{a=\ell}^{\infty}
\frac{f(p^a)-f(p^{a-1})}{p^a} \right).
$$

iii) The error term can be improved for certain values of $k$ and
$\ell$. For example in the case $k=3$, $\ell=2$ it is
$\OO(x^{8/25}\log^3 x)$.
\end{theoremT}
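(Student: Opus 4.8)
The plan is to prove i) by a direct computation with the Euler product, and then derive ii) and iii) from i) by the Dirichlet-convolution (hyperbola) method combined with standard estimates for generalized divisor problems.

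\emph{Part~i).} Condition b) gives $f(p^a)\ll a^m$, hence $f(n)\ll n^\eps$ (as in Lemma~1), so $F(s)$ converges absolutely for $\sigma>1$ and, being multiplicative, factors as $F(s)=\prod_p\sum_{a\ge0}f(p^a)p^{-as}$. I would set $V(s):=F(s)\,\zeta(s)^{-1}\zeta(\ell s)^{-(k-1)}=\prod_p V_p(s)$ with $V_p(s)=\bigl(\sum_{a\ge0}f(p^a)p^{-as}\bigr)(1-p^{-s})(1-p^{-\ell s})^{k-1}$, and inspect $V_p$. By a), multiplying the $p$-th factor of $F$ by $(1-p^{-s})$ telescopes the coefficients at $p^{-s},\dots,p^{-(\ell-1)s}$ and at $p^{-(\ell+1)s}$ to $0$ and the one at $p^{-\ell s}$ to $k-1$, so $(1-p^{-s})\sum_{a}f(p^a)p^{-as}=1+(k-1)p^{-\ell s}+O(p^{-(\ell+2)\sigma})$ (the tail controlled by b)); then $(1-p^{-\ell s})^{k-1}=1-(k-1)p^{-\ell s}+O(p^{-2\ell\sigma})$ cancels the $p^{-\ell s}$-term, leaving $V_p(s)=1+O(p^{-(\ell+2)\sigma})$ uniformly in $p$ (using $2\ell\ge\ell+2$ for $\ell\ge2$). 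Hence $\prod_p V_p(s)$ converges absolutely precisely for $\sigma>1/(\ell+2)$, which is i). The same manipulation yields $F(s)\zeta(s)^{-1}=\prod_p\bigl(1+\sum_{a\ge\ell}(f(p^a)-f(p^{a-1}))p^{-as}\bigr)$, and evaluating at $s=1$ identifies $\res_{s=1}F(s)$ with the product $C_f$ of the statement.

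\emph{Part~ii).} From i) we get $f=\mathbf 1\star b\star v$, where $v$ is the coefficient sequence of $V$ and $b$, the coefficient sequence of $\zeta^{k-1}(\ell s)$, is supported on perfect $\ell$-th powers with $b(m^\ell)=\tau_{k-1}(m)$; thus $\mathbf 1\star b=\tau(1,\underbrace{\ell,\dots,\ell}_{k-1};\cdot)$. Put $G(y):=\sum_{n\le y}(\mathbf 1\star b)(n)=\sum_{m\le y^{1/\ell}}\tau_{k-1}(m)\lfloor y/m^\ell\rfloor$, so that $\sum_{n\le x}f(n)=\sum_{e\le x}v(e)\,G(x/e)$. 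First I would prove $G(y)=\zeta^{k-1}(\ell)\,y+y^{1/\ell}P_{k-2}(\log y)+O(y^{u_{k,\ell}+\eps})$: the two main terms are the residues of $\zeta(s)\zeta^{k-1}(\ell s)\,y^s/s$ at $s=1$ (simple) and $s=1/\ell$ (order $k-1$, producing the polynomial of degree $k-2$), made explicit through $\lfloor y/m^\ell\rfloor=y/m^\ell-\tfrac12-\psi(y/m^\ell)$ and partial summation against $\sum_{m\le t}\tau_{k-1}(m)=t\,Q_{k-2}(\log t)+O(t^{\theta_{k-1}+\eps})$; the remainder is a generalized-divisor error of the shape $\sum_{m\le y^{1/\ell}}\tau_{k-1}(m)\psi(y m^{-\ell})$, which I would treat by the hyperbola method — splitting off short ranges of $m$ (bounded trivially via $\sum_{m\le M}\tau_{k-1}(m)\ll M\log^{k-2}M$) and handling the rest by van der Corput / exponent-pair estimates for $\sum_{m\asymp M}\psi(y m^{-\ell})$ carrying the $\tau_{k-1}$-weight — and optimizing the parameters, which delivers $u_{k,\ell}=\tfrac{2k-1}{3+(2k-1)\ell}$. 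Finally, since $V$ converges absolutely for $\sigma>1/(\ell+2)$ and $1/(\ell+2)$ lies below both $1/\ell$ and $u_{k,\ell}$ (the latter because $k\ge2$), the series $\sum_e v(e)/e$, $\sum_e v(e)/e^{1/\ell}$ and $\sum_e|v(e)|/e^{u_{k,\ell}+\eps}$ all converge, so summing $v(e)\,G(x/e)$ over $e$ (with the usual truncation for $e$ close to $x$) reproduces the main terms — with $x$-coefficient $\zeta^{k-1}(\ell)V(1)=C_f$ and $x^{1/\ell}$-polynomial $P_{f,k-2}$ — plus an error $O(x^{u_{k,\ell}+\eps})$.

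\emph{Part~iii).} Here one specializes and substitutes, for the generic exponent-pair bound, the sharpest known one for the particular divisor error. For $k=3$, $\ell=2$ the error of $G(y)$ is, up to the $\tau$-remainder, $\sum_{m\le\sqrt y}\tau(m)\psi(y m^{-2})$; writing $\tau(m)=\sum_{de=m}1$ turns this into the two-dimensional sum $\sum_{de\le\sqrt y}\psi\bigl(y(de)^{-2}\bigr)$, i.e.\ exactly the $\tau(1,2,2;\cdot)$ divisor problem, for which an appropriate exponent pair (see \cite{kratzel1988}) gives $O(y^{8/25}\log^3 y)$; as $1/4<8/25$, the convolution with $v$ preserves this and yields $O(x^{8/25}\log^3 x)$. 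I expect the error estimate in ii) to be the main obstacle: extracting the precise exponent $u_{k,\ell}=\tfrac{2k-1}{3+(2k-1)\ell}$ rather than a weaker one requires the right decomposition and careful tracking of the exponent-pair bound with the $\tau_{k-1}$-weight across all scales $M\le y^{1/\ell}$, whereas i) and iii) are essentially routine once this machinery is set up.
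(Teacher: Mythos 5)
You should first note that the paper does not actually prove Theorem~T: it is quoted verbatim from Tóth \cite{toth2007a} and used as a black box, and the only proof-level analysis the paper offers is in Theorem~\ref{th:toth-improved}, where it observes that the error term of Theorem~T is governed by $\Delta(1,\ell,\dots,\ell;x)$ (with $k-1$ copies of $\ell$) and improves that estimate by citing Krätzel. Your outline is structurally the same as Tóth's argument and agrees with the paper's diagnosis: part~i) via the Euler product is correct and essentially complete (the telescoping of the $p$-factor, the exponent $1/(\ell+2)$ from $2\ell\ge\ell+2$, and the identification $C_f=\zeta^{k-1}(\ell)V(1)$ all check out), and your reduction of part~ii) to the divisor problem for $\tau(1,\ell,\dots,\ell;\cdot)$ convolved with an absolutely convergent factor is exactly the right decomposition, with the convergence comparisons $1/(\ell+2)<u_{k,\ell}$ and $1/(\ell+2)<1/\ell$ correctly verified.

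The gap is in the one non-routine step, which is also where all of the analytic content lives: the bound $\Delta(1,\ell,\dots,\ell;x)\ll x^{u_{k,\ell}+\eps}$ with $u_{k,\ell}=\frac{2k-1}{3+(2k-1)\ell}$. You propose to bound $\sum_{m\le y^{1/\ell}}\tau_{k-1}(m)\psi(ym^{-\ell})$ by ``van der Corput / exponent-pair estimates carrying the $\tau_{k-1}$-weight'' and assert that optimizing parameters ``delivers'' the stated exponent. One-dimensional exponent pairs do not apply to a $\psi$-sum weighted by a divisor function; one must open $\tau_{k-1}(m)$ into a $(k-1)$-fold sum and treat a genuinely multi-dimensional lattice-point problem (this is precisely the content of Krätzel's Chapter~6 machinery, e.g.\ his Theorem~6.8, which the paper invokes in Theorem~\ref{th:toth-improved} and which yields the \emph{different} exponent $1/(\ell+1-\theta_{k-1})$ of \eqref{eq:ukl-improved}). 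As written, the specific value $u_{k,\ell}$ is asserted rather than derived, and nothing in your sketch guarantees that the optimization lands at or below it. The same remark applies to part~iii), although there you do at least cite Krätzel for $\theta(1,2,2)\le 8/25$. If you replace the hand-waved optimization by an explicit appeal to the known estimates for $\Delta(1,\ell,\dots,\ell)$ (as the paper itself does), the argument closes; as it stands the central estimate of part~ii) is missing.
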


Tóth considered $\tau_k^{(e)}$, showed that
\begin{equation}\label{eq:Etauk-zeta-expansion}
\sum_{n=1}^\infty {\tau_k^{(e)}(n) \over n^s}
= \zeta(s) \zeta^{k-1}(2s) H(s),
\end{equation}
where $H(s)$ converges absolutely for $\sigma > 1/5$, and thus obtained that
\begin{equation}\label{eq:sum-of-Etauk-Toth}
\sum_{n\le x} \tau_k^{(e)}(n)
= C_k x + x^{1/2} S_{k-2}(\log x) + \OO(x^{w_k+\eps}),
\end{equation}
where $w_k = (2k-1)/(4k+1)$.

We are going to improve this estimate by improving Theorem T.

\begin{theorem}\label{th:toth-improved}
In conditions and notations of Theorem T
we can take
\begin{equation}\label{eq:ukl-improved}
u_{k,\ell} = {1\over \ell+1-\theta_{k-1}}.
\end{equation}
In particular for $k\geqslant5$ we have
\begin{equation}\label{eq:ukl-inequation}
u_{k,\ell} \le {k+1 \over 3 + (k+1) \ell}
< {2k-1 \over 3 + (2k-1) \ell}.
\end{equation}
and for $k\ge1000$ an inequality
\begin{equation}\label{eq:ukl-inequation-big-k}
u_{k,\ell} \le {1\over l+ck^{-2/3}},
\qquad
c \le \left( 40 \over 267 \right)^{2/3} .
\end{equation}
holds.
\end{theorem}
\begin{proof}
The analysis of the proof of the Theorem T shows that the error term in it is caused by the estimate of $$\Delta(1,\underbrace{\ell,\ldots,\ell}_{k-1\text{~times}}).$$
Let us show how this estimate can be improved.

Taking into account
$$
\sum_{n\le x} \tau(\underbrace{\ell,\ldots,\ell}_{k\text{~times}}; n)
= \sum_{n\le x^{1/\ell}} \tau_k(n),
$$
we obtain that
$$
\Delta(\underbrace{\ell,\ldots,\ell}_{k\text{~times}}; x)
\ll x^{\theta_k/\ell + \eps}
\quad\text{and}\quad
\Delta(\underbrace{\ell,\ldots,\ell}_{k-1\text{~times}}; x)
\ll x^{\theta_{k-1}/\ell + \eps}.
$$

Substituting $p=k$, $q=1$, $(a_1,\ldots,a_p) = (1,\ell,\ldots,\ell)$ into \cite[Th.~6.8]{kratzel1988} we get
$$
\Delta(1,\underbrace{\ell,\ldots,\ell}_{k-1\text{~times}}; x)
\ll x^{u_{k,l} + \eps},
$$
where $u_{k,l} $ can be chosen as in \eqref{eq:ukl-improved}.

Values of $\theta_k$ have been widely studied: Huxley \cite{huxley2005} proved that $\theta_2 \hm\le 131/416$; Kolesnik~\cite{kolesnik1981} proved $\theta_3 \le 43/96$; many other special cases can be found in the book of Titchmarsh~\cite[Ch. 12]{titchmarsh1986} and in the paper of Ivić and  Ouellet~\cite{ivic1989}. Namely, \cite[Th.~12.3]{titchmarsh1986} gives an estimate
$$
\theta_k \le {k-1\over k+2}, \qquad k\ge4,
$$
which implies inequality \eqref{eq:ukl-inequation} and \cite[Th. 1]{ivic1989} together with Ford's estimate \cite{ford2002} and Kolpakova's estimate \cite{kolpakova2011} implies \eqref{eq:ukl-inequation-big-k}.
\end{proof}

Substitution $l=2$ and $k=k$ into Theorem \ref{th:toth-improved} allows to decrease $w_k$ in \eqref{eq:sum-of-Etauk-Toth} to
$$
w_k = {1\over 3-\theta_{k-1}}.
$$

Note that the application of Theorem \ref{th:toth-improved} instead of Theorem T also improves all other results of \cite{toth2007a}. Namely,
$$
\sum_{n\le x} \bigl( \tau^{(e)}(n) \bigr)^r
= A_r x + x^{1/2} P_{2^r-2}(\log x) + \OO(x^{u_r+\eps}),
\qquad
u_r = {2^r+1 \over 2^{r+1}+5},
$$
$$
\sum_{n\le x} \bigl( \phi^{(e)}(n) \bigr)^r
= B_r x + x^{1/3} R_{2^r-2}(\log x) + \OO(x^{t_r+\eps}),
\qquad
t_r = {2^r+1 \over 3(2^r+2)}.
$$

\begin{theorem}
For a fixed integer $k>1$ we have
$$
\sum_{n\le x} \tau_k^{(e)}(n)
= C_k x + x^{1/2} S_{k-2}(\log x) +  \Omega(x^{1/(4-2/k)}).
$$
\end{theorem}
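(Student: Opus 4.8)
The plan is to extract an $\Omega$-result directly from the zeta-expansion of the Dirichlet series of $\tau_k^{(e)}$, mirroring the mechanism used for the lower bound on $\sum_{n\le x} E^m\tau(n)$. By \eqref{eq:Etauk-zeta-expansion} we have
$$
\sum_{n=1}^\infty {\tau_k^{(e)}(n) \over n^s}
= \zeta(s) \zeta^{k-1}(2s) H(s),
$$
with $H(s)$ absolutely convergent for $\sigma > 1/5$. The factor $\zeta(s)$ accounts for the main term $C_k x$, and the pole of $\zeta^{k-1}(2s)$ of order $k-1$ at $s=1/2$ accounts for the secondary term $x^{1/2} S_{k-2}(\log x)$. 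After these are removed, the residual oscillation is governed by the nontrivial zeros of $\zeta^{k-1}(2s)$, i.e.\ by the zeros $\rho$ of $\zeta$ located at $s = \rho/2$. Since $\zeta$ has infinitely many zeros with $\Re\rho \ge 1/2$, the series has infinitely many singularities on (or to the right of) the line $\sigma = 1/4$, and each contributes an oscillating term of order roughly $x^{\Re\rho/2} \ge x^{1/4}$. The subtlety is to beat $x^{1/4}$ by the correct logarithmic power coming from the multiplicity $k-1$ of the pole/zero, which is what produces the exponent $1/(4-2/k)$ rather than merely $1/4$.

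The key steps, in order: (1) Write $\tau_k^{(e)} = g \star h$ where $g$ has Dirichlet series $\zeta(s)\zeta^{k-1}(2s)$ and $h$ has the absolutely convergent series $H(s)$; by partial summation and the convolution method it suffices to prove the $\Omega$-bound for $\sum_{n\le x} g(n)$, since $h$ contributes only $O(x^{1/5+\eps})$ and $1/5 < 1/(4-2/k)$ for all $k \ge 2$. (2) The function $g$ is $1 \star (\text{the }(k-1)\text{-fold convolution of the indicator of squares})$, so $\sum_{n\le x} g(n)$ is essentially $\sum_{m^2 \le x} \tau_{k-1}(m) \lfloor x/m^2 \rfloor$, and after pulling out the main and secondary terms the error is a lattice-point discrepancy of the shape studied in Krätzel's book. (3) Invoke the standard $\Omega$-theory for such discrepancies: the error term in $\sum_{n\le x} \tau(2,\ldots,2;n)$ (with $k-1$ twos), equivalently $\sum_{m\le\sqrt x}\tau_{k-1}(m)$, is known to be $\Omega(x^{1/4}(\log x)^{(k-2)}\cdots)$ type, and translating back gives exactly $\Omega(x^{1/(4-2/k)})$. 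One may cite the classical $\Omega$-results for $\Delta_{k-1}$ (e.g.\ via the theory in \cite{kratzel1988} or the Voronoï-type lower bounds) rather than reprove them; the exponent $1/(4-2/k) = k/(4k-2)$ is precisely the known $\Omega$-exponent for $\Delta_{k-1}(y)$ rescaled by $y = \sqrt{x}$, since $\theta_{k-1}$-type $\Omega$-results give exponent $(k-2)/(2(k-1))$-ish in $y$ — I would double-check the arithmetic of this rescaling, as that is where an off-by-one in the index is easiest to commit.

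The main obstacle I anticipate is not the complex-analytic input — the presence of $\zeta^{k-1}(2s)$ in the numerator with infinitely many zeros on $\sigma = 1/4$ immediately forces an $\Omega(x^{1/4-\eps})$ oscillation by a routine Landau-type argument — but rather pinning down the \emph{exact} exponent $1/(4-2/k)$. That value is strictly larger than $1/4$, so a crude "infinitely many poles on the line $\sigma=1/4$" argument is not enough; one needs the sharper $\Omega$-machinery that exploits the order $k-1$ of the zero, which historically comes from the work on $\Omega$-estimates for $\Delta_{k-1}$ (the divisor problem in dimension $k-1$). The cleanest route is therefore to reduce, via the identity $\sum_{n\le x}\tau(\underbrace{2,\dots,2}_{k-1};n) = \sum_{m\le\sqrt x}\tau_{k-1}(m)$ already used in the proof of Theorem \ref{th:toth-improved}, to a citation of the known $\Omega$-bound for the $(k-1)$-dimensional divisor discrepancy, and then verify that the substitution $\ell = 2$ carries that exponent to $1/(4-2/k)$. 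I expect the write-up to be short once that reduction is made explicit.
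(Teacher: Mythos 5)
There is a genuine gap, and the mechanism you propose is not the one the paper uses. The paper's proof is a one-line application of Kühleitner and Nowak's $\Omega$-theorem to the \emph{full} product $\zeta(s)\zeta^{k-1}(2s)$, with $f_1=\cdots=f_k=\zeta$, $m_1=1$, $m_2=\cdots=m_k=2$, which yields $\alpha = k/\bigl(2(1+2(k-1))\bigr) = 1/(4-2/k)$. Your first, complex-analytic remark is conceptually wrong: $\zeta^{k-1}(2s)$ sits in the \emph{numerator}, so its zeros at $s=\rho/2$ are not singularities of the Dirichlet series at all, and no Landau-type argument applies to them. (Contrast this with the paper's other $\Omega$-theorem, for $E^m\tau$, where a zeta factor genuinely appears in the denominator and its zeros do become poles.) For $\tau_k^{(e)}$ the generating function is holomorphic off the real axis in $\sigma>1/5$, so "infinitely many singularities on $\sigma=1/4$" is simply not available; the oscillation must instead be extracted from a Voronoï/resonance-type argument, which is exactly what the cited theorem packages.

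Your quantitative reduction also fails, and not by an off-by-one. The classical $\Omega$-exponent for $\Delta_{k-1}(y)$ is $(k-2)/\bigl(2(k-1)\bigr)$, so rescaling by $y=\sqrt{x}$ gives $x^{(k-2)/(4(k-1))}$, which is strictly smaller than $x^{k/(4k-2)}$ for every $k\ge2$ — at $k=2$ it gives exponent $0$ versus the claimed $1/3$, and at $k=3$ it gives $1/8$ versus $3/10$. The loss comes from discarding the factor $\zeta(s)$: the correct exponent for $\Delta(1,2,\ldots,2;x)$ is governed jointly by all $k$ components of the exponent vector (the Szegő--Walfisz heuristic gives $(p-1)/\bigl(2\sum a_i\bigr)$ already with $p=k$, and Kühleitner--Nowak sharpen this to $k/\bigl(2\sum a_i\bigr)$ here), not by the $k-1$ components alone. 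Moreover, even the weaker bound does not follow from your step (2) as written: interchanging summation turns the error into an \emph{average} $\sum_{d\le x}\Delta_{k-1}(\sqrt{x/d})$, and pointwise $\Omega$-results do not survive such averaging (nor, in general, Dirichlet convolution with $h$) without additional work. The fix is to abandon the reduction to $\Delta_{k-1}$ and cite the $\Omega$-theorem for the full system $(1,2,\ldots,2)$ directly, as the paper does.
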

\begin{proof}
Application of Kühleitner and Nowak \cite[Th.~2]{kuhleitner1994} with
$f_1=\cdots=f_k=\zeta$, $m_1=1$, $m_2=\cdots=m_k=2$
gives us
$$
\alpha = {k\over 2\bigl(1+2(k-1)\bigr)} = {1\over 4-2/k}.
$$
\end{proof}

\section{Multiexponential multidimensional divisor functions}

Consider the two-parametric family of functions
$$
\{ E^m\tau_k \}, \qquad k>2, \quad m>1.
$$

One can check that for $m>1$ the least arguments on which $E^m\tau_k(n) \hm\ne 1$ are (as in the case of $k=2$) $\m$, $3\m$ and $5\m$.

\begin{lemma}
For fixed integer $k>2$, $m>1$ we have
\begin{equation}\label{eq:Emtauk-zeta-expansion}
\sum_{n=1}^\infty {E^m\tau_k(n) \over n^s} = {\zeta(s) \zeta^{k-1}(\m s) \over \zeta^{k-1}\bigl((\m+1)s\bigr) \zeta^{k(k-1)/2}(2\m s) } H(s),
\end{equation}
where $H(s)$ converges absolutely for
$
\sigma > {1/( 2\m+1)}
$.
\end{lemma}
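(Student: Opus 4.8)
The plan is to imitate the computation used for \eqref{eq:Emtau-zeta-expansion}, but now keeping track of the full local factor at each prime, because the exponents attached to the $\zeta$-factors in \eqref{eq:Emtauk-zeta-expansion} depend on $k$. First I would record, exactly as in the $k=2$ case, that for $m>1$ the three smallest arguments $n$ with $E^m\tau_k(p^n)\ne1$ are $n=\m$, $n=3\m=\m'$ and $n=5\m=\m''$, and that on the first two of these the value is $\tau_k(1)=\ldots$ — more precisely $E^m\tau_k(p^\m)=\tau_k$ of something. In fact, tracing the definition $A_m=\bigcup_p\bigcup_{a\in A_{m-1}}p^aR_p$, the value $E^m\tau_k(p^{\m})$ equals $E^{m-1}\tau_k(\mms)$, and iterating down to $m=1$ one gets $E^m\tau_k(p^\m)=\tau_k(p^1\cdots)=k$ (for $k=2$ this recovers the value $2$ used earlier). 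So I would first establish, by the same descending induction as in Lemma (on $\m'=3\m$, $\m''=5\m$), that
$$
S_p(x):=\sum_{n\ge0}E^m\tau_k(p^n)x^n = \sum_{n\ge0}x^n + (k-1)x^{\ms} + (k-1)x^{3\ms} + O(x^{5\ms}),
$$
where the coefficient $k-1$ arises because $E^m\tau_k(p^\m)=k$ contributes $k-1$ beyond the geometric term $x^\m$ already counted in $\sum x^n$.

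Next I would extract the $\zeta$-factors one at a time, multiplying $S_p$ successively by $(1-x)$, then by $(1-x^\m)^{k-1}$, and so on, to reduce the leading behaviour to $1+O(x^{\text{small}})$. Multiplying by $(1-x)$ kills the $\sum x^n$ and leaves $1+(k-1)x^\m+O(x^{\m+1})$; multiplying by $(1-x^\m)^{k-1}$ turns $1+(k-1)x^\m$ into $1+O(x^{\min(\m+1,\,2\m)})=1+O(x^{\m+1})$ since $\m\ge4$, producing the factor $\zeta^{k-1}(\m s)$. The stray $x^{\m+1}$ terms are absorbed into $\zeta^{-(k-1)}((\m+1)s)$, i.e.\ a factor $(1-x^{\m+1})^{k-1}$ in the numerator of the local product — wait, in the denominator, matching $1/\zeta^{k-1}((\m+1)s)$. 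The $x^{2\m}$ terms — and here is where the binomial coefficient enters — come from the cross-terms in expanding $(1-x^\m)^{k-1}=1-(k-1)x^\m+\binom{k-1}{2}x^{2\m}-\cdots$ together with the $(k-1)x^\m$ already present in $S_p$: the residual $x^{2\m}$ coefficient works out to $\binom{k}{2}=k(k-1)/2$, which is exactly the exponent of $\zeta(2\m s)$ in the denominator. The remaining $x^{3\m}$ discrepancy is mopped up by $H(s)$, whose local factor is $1+O(x^{3\m+1})$ — wait, I should only claim $1+O(x^{2\m+1})$ to match the stated abscissa $\sigma>1/(2\m+1)$; I would check which of $2\m+1$ and $3\m$ is the true bottleneck and state the weaker (safe) bound $\sigma>1/(2\m+1)$ as in the lemma.

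The main obstacle is the bookkeeping of the $x^{2\m}$-coefficient: one must verify that after the exact cancellations the coefficient is precisely $k(k-1)/2$ and not, say, $\binom{k-1}{2}$ or $\binom{k}{2}$ off by the contribution of the $(k-1)x^\m$ term in $S_p$ interacting with the $-(k-1)x^\m$ from $(1-x^\m)^{k-1}$. Concretely, $(1-x^\m)^{k-1}S_p(1-x)$ has $x^{2\m}$-coefficient equal to $[\text{coeff of }x^{2\m}\text{ in }(1-x^\m)^{k-1}] + (k-1)\cdot[\text{coeff of }x^{\m}\text{ in }(1-x^\m)^{k-1}] = \binom{k-1}{2} - (k-1)^2$, and I would need to confirm this simplifies — together with whatever the $x^{3\m}$-analysis forces — to give the clean exponent $k(k-1)/2$ on $\zeta(2\m s)$; the apparent sign mismatch means the $2\m$-coefficient is negative, i.e.\ it genuinely belongs in the denominator, consistent with \eqref{eq:Emtauk-zeta-expansion}. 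Once the local factorisation
$$
S_p(x)=\frac{1}{(1-x)}\cdot\frac{(1-x^{\m+1})^{k-1}(1-x^{2\m})^{k(k-1)/2}}{(1-x^\m)^{k-1}}\cdot\bigl(1+O(x^{2\m+1})\bigr)
$$
is in hand, taking the Euler product over all $p$ gives \eqref{eq:Emtauk-zeta-expansion} with $H(s)=\prod_p(1+O(p^{-(2\m+1)s}))$ absolutely convergent for $\sigma>1/(2\m+1)$, and the proof is complete.
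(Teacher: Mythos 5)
Your proposal is correct and follows essentially the same route as the paper: expand the local factor $S(x)=\sum_{n\ge0}E^m\tau_k(p^n)x^n=\sum_{n\ge0}x^n+(k-1)x^{\ms}+O(x^{2\ms+1})$ using $E^m\tau_k(\m)=\tau_k(2)=k$, multiply by $(1-x)$ and $(1-x^{\ms})^{k-1}$, and read off the residual coefficients $-(k-1)$ at $x^{\ms+1}$ and $\binom{k-1}{2}-(k-1)^2=-k(k-1)/2$ at $x^{2\ms}$, which is exactly the computation the paper performs. The step you flagged as needing confirmation does check out, and the $3\ms$-term you carry along is not needed since the claimed abscissa only requires the local factor of $H$ to be $1+O(x^{2\ms+1})$.
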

\begin{proof}
As soon as $E^m\tau_k(\m)=\tau_k(2)=k$ we get
$$
S(x):= \sum_{n\ge0} E^m\tau_k(p^n)
= \sum_{n\ge0}x^n + (k-1)x^\ms + \OO(x^{2\ms+1}),
$$
\begin{align*}
(1-x)S(x) &= 1+(k-1)x^\ms - (k-1)x^{\ms+1} +  \OO(x^{2\ms+1}), \\
(1-x)(1-x^\ms)^{k-1}S(x) &= 1 - (k-1)x^{\ms+1} - {k(k-1)\over2} x^{2\ms} + \OO(x^{2\ms+1}),
\end{align*}
which implies lemma's statement.
\end{proof}

\begin{theorem}\label{th:Emtauk-asymp}
For fixed integer $k>2$, $m>1$ we have
\begin{equation}\label{eq:Emtauk-estimate}
\sum_{n\le x} E^m\tau_k(n)
= K_{m,k} x + x^{1/\ms} R_{m,k-2}(\log x) + \OO(x^{\alpha_{m,k}+\eps}),
\end{equation}
where $K_{m,k}$ is a computable constant, $R_{m,k-2}$ is a polynomial of degree~$k-2$ and
$$
\alpha_{m,k} = {1\over \m+1-\theta_{k-1}}.
$$
\end{theorem}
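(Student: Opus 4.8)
The plan is to follow the same route as the proof of the Theorem around \eqref{eq:sum-of-Emtau}: peel off from \eqref{eq:Emtauk-zeta-expansion} a divisor function whose summatory behaviour is already under control, and dispose of the rest by the convolution method. Concretely, read \eqref{eq:Emtauk-zeta-expansion} as
$$
E^m\tau_k = \tau\bigl(1,\underbrace{\m,\ldots,\m}_{k-1};\cdot\bigr) \star g,
$$
where $g$ is the arithmetic function whose Dirichlet series equals $\zeta^{-(k-1)}\bigl((\m+1)s\bigr)\,\zeta^{-k(k-1)/2}(2\m s)\,H(s)$. Since $H$ converges absolutely for $\sigma>1/(2\m+1)$ while $1/\zeta\bigl((\m+1)s\bigr)$ and $1/\zeta(2\m s)$ converge absolutely for $\sigma>1/(\m+1)$, the series of $g$ converges absolutely for $\sigma>1/(\m+1)$; equivalently $\sum_{n}|g(n)|n^{-\sigma}<\infty$ whenever $\sigma>1/(\m+1)$.

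The key input is the summatory function of the leading factor, whose Dirichlet series is precisely $\zeta(s)\zeta^{k-1}(\m s)$. This is exactly the situation of Theorem~\ref{th:toth-improved} with $\ell=\m$ (equivalently, \cite[Th.~6.8]{kratzel1988} applied with $p=k$, $q=1$ and $(a_1,\ldots,a_p)=(1,\m,\ldots,\m)$, as in the proof of Theorem~\ref{th:toth-improved}), so by \eqref{eq:ukl-improved}
$$
\sum_{n\le x}\tau\bigl(1,\underbrace{\m,\ldots,\m}_{k-1};n\bigr)
= c\,x + x^{1/\ms}\,Q_{k-2}(\log x) + O\bigl(x^{\alpha_{m,k}+\eps}\bigr),
\qquad
\alpha_{m,k}={1\over\m+1-\theta_{k-1}} .
$$
Here $Q_{k-2}$ has degree $k-2$, because already in $\zeta(s)\zeta^{k-1}(\m s)$ the only singularity with $\sigma\ge\alpha_{m,k}$ apart from the simple pole at $s=1$ is the pole of order $k-1$ contributed by $\zeta^{k-1}(\m s)$ at $s=1/\m$, where $\zeta(s)$, $1/s$ and the remaining factors are holomorphic and non-zero.

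Finally the convolution method closes the argument. Writing $\sum_{n\le x}E^m\tau_k(n)=\sum_{d\le x}g(d)\sum_{n\le x/d}\tau(1,\m,\ldots,\m;n)$, inserting the displayed expansion and completing each $d$-sum to infinity is legitimate because $1/(\m+1)<\alpha_{m,k}<1/\m$, both inequalities holding since $0<\theta_{k-1}<1\le\m$ when $m>1$: the linear term gives $K_{m,k}x$ with $K_{m,k}=c\sum_{d}g(d)/d$; the middle term gives $x^{1/\ms}R_{m,k-2}(\log x)$ with $\deg R_{m,k-2}=k-2$ after expanding $Q_{k-2}\bigl(\log(x/d)\bigr)$ in powers of $\log x$ and using the convergence of $\sum_{d}g(d)d^{-1/\m}(\log d)^{j}$; and the error term of the displayed expansion together with the three completing tails are all $O(x^{\alpha_{m,k}+\eps})$, since $\alpha_{m,k}$ lies strictly above the abscissa of absolute convergence of $g$.

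I do not expect a real obstacle. The one step that deserves care is the middle one — checking that \cite[Th.~6.8]{kratzel1988} does apply to the index vector $(1,\m,\ldots,\m)$ and delivers exactly the exponent $\alpha_{m,k}=1/(\m+1-\theta_{k-1})$ together with a secondary term $x^{1/\ms}$ times a polynomial of degree $k-2$ — but this is word for word the computation already carried out inside the proof of Theorem~\ref{th:toth-improved}, with $\ell$ replaced by $\m$, so it transfers verbatim.
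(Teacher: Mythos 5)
Your proposal is correct and is essentially the paper's own proof unpacked: the paper simply cites \eqref{eq:Emtauk-zeta-expansion} together with Theorem \ref{th:toth-improved}, whose content is exactly your two steps — Krätzel's estimate for $\Delta(1,\m,\ldots,\m;x)$ giving the exponent $1/(\m+1-\theta_{k-1})$, followed by the convolution method. If anything, your version is slightly more careful than the paper's one-liner, since you work directly with the Dirichlet-series factorization and verify that the abscissa of absolute convergence of the cofactor $g$ (namely $1/(\m+1)$, forced by the $\zeta^{-(k-1)}((\m+1)s)$ factor) lies strictly below $\alpha_{m,k}$, rather than checking the literal pointwise hypotheses of Theorem T.
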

\begin{proof}
Follows from \eqref{eq:Emtauk-zeta-expansion} and Theorem \ref{th:toth-improved}.
\end{proof}

\begin{theorem}
For fixed integer $k>2$, $m>1$ we have
$$
\sum_{n\le x} E^m\tau_k(n)
= K_{m,k} x + x^{1/\ms} R_{m,k-2}(\log x) + \Omega(x^\alpha),
\qquad
\alpha = {1 \over 2\bigl(\m+1/(k-1)\bigr)}
$$
\end{theorem}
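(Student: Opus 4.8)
The strategy mirrors the proof of the preceding $\Omega$-theorem for $\tau_k^{(e)}$: one feeds the zeta-factorisation \eqref{eq:Emtauk-zeta-expansion} into the $\Omega$-theorem of Kühleitner and Nowak \cite[Th.~2]{kuhleitner1994}. First I would read off the building blocks of \eqref{eq:Emtauk-zeta-expansion}: the factor $\zeta(s)$, whose simple pole at $s=1$ produces the linear main term $K_{m,k}x$; the factor $\zeta^{k-1}(\m s)$, whose pole of order $k-1$ at $s=1/\m$ produces $x^{1/\ms}R_{m,k-2}(\log x)$ of degree $k-2$; the denominator factors $\zeta^{-(k-1)}\bigl((\m+1)s\bigr)$ and $\zeta^{-k(k-1)/2}(2\m s)$; and the absolutely convergent factor $H(s)$.

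Next I would fit these into the hypotheses of Kühleitner--Nowak, taking $f_1=\cdots=f_k=\zeta$ with scaling parameters $m_1=1$ and $m_2=\cdots=m_k=\m$, and treating the two denominator factors as a fixed part of the Euler product rather than as a negligible convolution tail. This choice is forced: a short computation shows that the abscissae of absolute convergence of $\zeta^{-(k-1)}\bigl((\m+1)s\bigr)$ and of $\zeta^{-k(k-1)/2}(2\m s)$, namely $1/(\m+1)$ and $1/(2\m)$, both exceed the target exponent $\alpha=1/\bigl(2(\m+1/(k-1))\bigr)$, whereas the only factor that may genuinely be split off is $H(s)$, whose abscissa $1/(2\m+1)$ satisfies $1/(2\m+1)\le\alpha$ for every $k\ge3$ (with equality only at $k=3$). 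Kühleitner--Nowak's exponent formula, once the denominator factor $\zeta^{-(k-1)}\bigl((\m+1)s\bigr)$ has cut the effective number of blocks from $k$ down to $k-1$, then returns
$$
\alpha=\frac{k-1}{2\bigl((k-1)\m+1\bigr)}=\frac{1}{2\bigl(\m+1/(k-1)\bigr)}.
$$

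I expect the main obstacle to be exactly this accounting of the denominator zeta-factors inside the Kühleitner--Nowak machinery: in the $\tau_k^{(e)}$ application there were no such factors, and one must verify that $\zeta^{-(k-1)}\bigl((\m+1)s\bigr)$ is responsible for the missing unit in the numerator of the exponent --- the gap between the value $k/\bigl(2(2k-1)\bigr)$ obtained for $\tau_k^{(e)}$ and the present $(k-1)/\bigl(2((k-1)\m+1)\bigr)$ is precisely this $-1$. A secondary point is the borderline case $k=3$, where $1/(2\m+1)=\alpha$: there the separation of the convergent tail $H(s)$ is tight, and one needs either a sharper convergence estimate for $H$ (which the local computation in the lemma establishing \eqref{eq:Emtauk-zeta-expansion} can supply) or a partial-summation refinement of the convolution step, to be sure the oscillation is not swamped. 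The remaining checks --- that the subtracted terms $K_{m,k}x$ and $x^{1/\ms}R_{m,k-2}(\log x)$ are exactly the residues of \eqref{eq:Emtauk-zeta-expansion} at $s=1$ and $s=1/\m$ --- are already provided by Theorem~\ref{th:Emtauk-asymp}.
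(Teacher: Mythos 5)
Your proposal follows the paper's proof essentially verbatim: both feed the factorisation \eqref{eq:Emtauk-zeta-expansion} into Kühleitner--Nowak's Theorem~2 with $f_1=\cdots=f_k=\zeta$, $m_1=1$, $m_2=\cdots=m_k=\m$, and both arrive at $\alpha=(k-1)/\bigl(2(1+\m(k-1))\bigr)=1/\bigl(2(\m+1/(k-1))\bigr)$. The ``main obstacle'' you anticipate --- accounting for the denominator zeta-factors --- is dealt with in the paper simply by using the denominator slots that Theorem~2 already provides, namely $g_1=\cdots=g_{(k+2)(k-1)/2}=\zeta$ with $n_1=\cdots=n_{k-1}=\m+1$ and $n_k=\cdots=n_{(k+2)(k-1)/2}=2\m$, so no separate bookkeeping for those factors is required.
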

\begin{proof}
By substitution
$$f_1=\cdots=f_k=g_1=\cdots=g_{(k+2)(k-1)/2}=\zeta,$$
$$m_1=1, \quad m_2=\cdots=m_k=\m,$$
$$n_1=\cdots=n_{k-1}=\m+1, \quad n_k=\cdots=n_{(k+2)(k-1)/2} = 2\m$$
into \cite[Th.~2]{kuhleitner1994} we obtain
$$
\alpha = {k-1\over 2(1+\m(k-1))}.
$$
\end{proof}

\section{Multiexponential three-dimensional divisor function}

In the case of $\tau_3$ the statement of Theorem \ref{th:Emtauk-asymp} can be improved
using stronger estimates for $ \theta(1,\m,\m) $. Exponent pairs of form
$$A^{\mms-1} B A^{\mms-2} BABA^2 (k_0, l_0)$$
seems to be especially useful for this task. We shall prove only the simplest result of this kind.

\begin{lemma}\label{l:sum-of-tau1mm-precise}
For a fixed integer $r\ge10$ we have
\begin{equation}\label{eq:sum-of-tau1mm-precise}
\theta(1,2^r,2^r) = {
26\cdot 2^{2r} - (29r+41) 2^r + 16r^2+12r+32
\over
26\cdot 2^{3r} - (16r+41) 2^{2r} + (24r-3) 2^r + 16r+12
} < {1\over 2^r+1}.
\end{equation}
\end{lemma}
\begin{proof}
Follows from the application of \cite[Th. 6.2]{kratzel1988} with
$$
(k,l) = A^{r-1} B A^{r-2} BABA^2 \cdot B (0,1).
$$
Here $(k,l)$ can be evaluated similar to Lemma \ref{l:sum-of-tau1m-precise}:
\begin{align*}
k &= { 13\cdot 2^r - 16r -12 \over 26\cdot 2^{2r} - (16r+54) 2^r + 32r+24 },
\\
l &= 1 - { 13r\cdot 2^r - 16r^2+4r-20 \over 26\cdot 2^{2r} - (16r+54) 2^r + 32r+24 }.
\end{align*}
\end{proof}

For larger $r$ more complicated exponent pairs can be used; they provide slightly lesser values of $\theta(1,2^r,2^r)$.

For small $m$ good estimates of $\theta(1,\m,\m)$ may be obtained by the careful manual selection of exponent pairs, appropriate for the substitution into \cite[Th.~6.2, Th.~6.3]{kratzel1988}. We have calculated several first estimates, see Table \ref{tbl:tau1mm-precise}.

\begin{table}[h]
\begin{tabular}{@{}r|r|r|c@{}}
$m$ & $\m$ & $\theta(1,\m,\m)$  & Exp. pair or ref.\\\hline
0 &     2 & $ 8/25 = 0.320000 $ & \cite[Th. 6.4]{kratzel1988} with $k=3$ \\\hline
1 &     4 & $ (7+\sqrt{809})/190 \approx 0.186542 $ & $ A (BA)^4 (A^2 BA A)^\infty I $ \\\hline
2 &    16 & $ 93607/1698654 \approx 0.055107 $ & $ A^3BA(A(BA)^2A)^3BAA^3(BA)^2 A BA I $ \\
\end{tabular}
\smallskip
\caption{Special values of $\theta(1,\cdot,\cdot)$. Exponent pairs are written in terms of  $A$- and $B$-processes. Here~$I=(0,1)$.}
\label{tbl:tau1mm-precise}
\end{table}

\begin{theorem}
For a fixed integer $m>1$ we have
\begin{equation*}\label{eq:sum-of-Emtau3}
\sum_{n\le x} E^m\tau_3(n) = K_{m,3} x + (r_1\log x + r_0) x^{1/\ms} + \OO(x^{1/(\ms+1)}),
\end{equation*}
where $K_{m,3}$, $r_1$ and $r_0$ are computable constants.
\end{theorem}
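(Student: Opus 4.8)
The plan is to combine the zeta-expansion \eqref{eq:Emtauk-zeta-expansion} (taken with $k=3$) with the convolution method, using this time the sharp bound $\theta(1,\m,\m)<1/(\m+1)$ furnished by the preceding lemma (and, for $m=2$, by Table~\ref{tbl:tau1mm-precise}) in place of the weaker $\theta_2$ used in Theorem~\ref{th:Emtauk-asymp}. First I would rewrite \eqref{eq:Emtauk-zeta-expansion} with $k=3$ as
$$
\sum_{n=1}^\infty \frac{E^m\tau_3(n)}{n^s} = \zeta(s)\,\zeta^2(\m s)\cdot G(s),
\qquad
G(s) := \frac{H(s)}{\zeta^2\!\bigl((\m+1)s\bigr)\,\zeta^3(2\m s)},
$$
so that $E^m\tau_3 = \tau(1,\m,\m;\cdot)\star g$, where $g$ is the coefficient sequence of $G$ and $\zeta(s)\zeta^2(\m s)=\sum_n\tau(1,\m,\m;n)n^{-s}$. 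By \cite[Th.~6.2]{kratzel1988} one has
$$
\sum_{e\le y}\tau(1,\m,\m;e) = c_1 y + (c_2\log y + c_3)\,y^{1/\m} + O\!\bigl(y^{\theta(1,\m,\m)+\eps}\bigr),
$$
the $\log y$ being produced by the double pole of $\zeta^2(\m s)$ at $s=1/\m$; and the preceding lemma (for $m\ge3$) together with Table~\ref{tbl:tau1mm-precise} (for $m=2$) give $\theta(1,\m,\m)<1/(\m+1)$ with a fixed positive gap, so that $\Delta(1,\m,\m;y)\ll y^{1/(\m+1)-\delta}$ for some $\delta=\delta(m)>0$.

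Next I would dispose of $G(s)$. Since $m>1$ forces $\m>1$, we have $1/(2\m+1)<1/(2\m)<1/(\m+1)$, so the factor of $G$ with Dirichlet series $\zeta^{-3}(2\m s)H(s)$ converges absolutely for $\sigma>1/(2\m)$; I would peel it off as a convolution factor $h$, noting that $\sum_{d=1}^\infty|h(d)|\,d^{-1/(\m+1)}<\infty$ and that $\sum_{d>y}|h(d)|\,d^{-1/(\m+1)}$ decays like a fixed negative power of $y$. The leftover factor $\zeta^{-2}((\m+1)s)$ is the Dirichlet series of $\mu_{\m+1}\star\mu_{\m+1}$, which is carried by the $(\m+1)$-th powers with $(\mu_{\m+1}\star\mu_{\m+1})(j^{\m+1})=(\mu\star\mu)(j)$, and for which $|(\mu\star\mu)(j)|\le 2^{\omega(j)}$ while $\sum_{j\le t}(\mu\star\mu)(j)\ll t\exp(-c\sqrt{\log t})$. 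Carrying out the convolution one gets, with $y=x/(d\,j^{\m+1})$,
$$
\sum_{n\le x}E^m\tau_3(n) = \sum_{d}h(d)\sum_{j^{\m+1}\le x/d}(\mu\star\mu)(j)\Bigl[c_1 y + (c_2\log y + c_3)y^{1/\m} + O\bigl(y^{1/(\m+1)-\delta}\bigr)\Bigr].
$$
The $c_1 y$-piece gives the linear term $K_{m,3}x$ after completing the $d$- and $j$-series (the tails being $O(x^{1/(\m+1)})$ — for the $j$-series by the prime-number-theorem bound on $\sum_j(\mu\star\mu)(j)$, for the $d$-series by absolute convergence at $\sigma=1/(\m+1)$); the $y^{1/\m}$-piece gives $(r_1\log x+r_0)x^{1/\m}$, since $\sum_j(\mu\star\mu)(j)j^{-(\m+1)/\m}$ and the same series weighted by $\log j$ converge because $(\m+1)/\m>1$; and the $\Delta$-piece, estimated trivially by
$$
x^{1/(\m+1)-\delta}\sum_{d}|h(d)|\,d^{-(1/(\m+1)-\delta)}\sum_{j\le(x/d)^{1/(\m+1)}}2^{\omega(j)}\,j^{-1+(\m+1)\delta},
$$
stays $O(x^{1/(\m+1)})$ thanks to the positive gap $\delta$.

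The main obstacle is exactly the factor $\zeta^{-2}((\m+1)s)$. Its coefficients $\mu_{\m+1}\star\mu_{\m+1}$ fail to be absolutely summable at the critical abscissa $\sigma=1/(\m+1)$, which is precisely where the denominators of all three main terms and the order of the error term sit, so one cannot just complete the Dirichlet series of $g$ and control the remainder by absolute convergence, as one can for $h$. The way out is to separate the two jobs this factor does: against the smooth main terms $c_1 y$ and $(c_2\log y+c_3)y^{1/\m}$ one uses the cancellation in $\mu\star\mu$ (i.e.\ the prime number theorem), while only against the genuinely oscillating $\Delta(1,\m,\m;\cdot)$, where no cancellation is at hand, does one need to spend the inequality $\theta(1,\m,\m)<1/(\m+1)$. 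This is why the section's refined exponent-pair estimates are indispensable and why the statement is restricted to $\tau_3$: for $\tau_k$ with $k\ge4$ the same scheme would demand $\theta(1,\underbrace{\m,\ldots,\m}_{k-1})<1/(\m+1)$, which the exponent-pair method does not presently reach.
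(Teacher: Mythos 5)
Your proposal is correct and follows exactly the route of the paper's (one-line) proof: the expansion \eqref{eq:Emtauk-zeta-expansion} with $k=3$, the bound $\theta(1,\m,\m)<1/(\m+1)$ from \eqref{eq:sum-of-tau1mm-precise} (supplemented by Table~\ref{tbl:tau1mm-precise} for $m=2$), and the convolution method. The only remark is that your appeal to cancellation in $\mu\star\mu$ via the prime number theorem is not actually needed: since the coefficients of $\zeta^{-2}\bigl((\m+1)s\bigr)$ are supported on $(\m+1)$-th powers, the trivial bound $\sum_{j\le Y}|(\mu\star\mu)(j)|j^{-(\m+1)\theta}\ll Y^{1-(\m+1)\theta+\eps}$ together with absolute convergence of the tails at the exponents $1$ and $1/\m$ already yields the stated $O(x^{1/(\ms+1)+\eps})$.
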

\begin{proof}
The statement follows from \eqref{eq:Emtauk-zeta-expansion}, \eqref{eq:sum-of-tau1mm-precise}
and the convolution method.
\end{proof}

Now we are going to refine the last estimate under RH. We need the following lemma, which generalizes \cite[Th. 2]{nowak1993}.

\begin{lemma}\label{l:nowak-generalized}
Consider a multiplicative function $f$ such that
$$
\sum_{n=1}^\infty {f(n) \over n^s}
= {\zeta(as)\zeta^r(bs)\over\zeta^k(cs)} := F(s),
$$
where $2a\le b<c<2(a+b)$.
Let $\Delta(x)$ be defined implicitly by the equation
$$
S(x) := \sum_{n\le x} f(n) = \left( \res_{s=1/a}+\res_{s=1/b} \right) F(s) x^s s^{-1} + \Delta(x).
$$
Abbreviate $\underbrace{b,\ldots,b}_{r \text{~times}}$ as $b\times r$.
Then under RH for any $1\le y \le x^{1/c}$
\begin{equation*}\label{eq:nowak-generalized}
\Delta(x) = \sum_{l\le y} \mu_k(l) \Delta(a,b\times r; x/l^c) + \OO(x^{1/2a+\eps} y^{1/2-c/2a} + x^\eps),
\end{equation*}
where $\mu_k$ is a multiplicative function such that
$$ \sum_{n=1}^\infty {\mu_k(n)\over n^s} = \zeta^{-k}(s) .$$
\end{lemma}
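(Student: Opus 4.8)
The plan is to mimic Nowak's original argument for $\tau(a,b;\cdot)\star\mu_c$, now carrying the extra zeta factors. First I would write $f = g \star \mu_k$, where $g$ is the multiplicative function with Dirichlet series $\zeta(as)\zeta^r(bs)$; concretely $g(n) = \tau(a,b\times r; n)$ in the notation of the lemma, so that its summatory function has main term $\bigl(\res_{s=1/a}+\res_{s=1/b}\bigr)\zeta(as)\zeta^r(bs) x^s s^{-1}$ plus error $\Delta(a,b\times r;x)$. Then $S(x) = \sum_{l} \mu_k(l) G(x/l^c)$, where $G(y) = \sum_{n\le y} g(n)$. Splitting this at the parameter $y$, the short part $\sum_{l\le y}\mu_k(l) G(x/l^c)$ produces exactly the main terms (summed over $l$, which reconstitute the residues at $s=1/a$ and $s=1/b$ of $F(s) = \zeta(as)\zeta^r(bs)\zeta^{-k}(cs)$ because $\sum_l \mu_k(l) l^{-cs}$ evaluated at those points gives the $\zeta^{-k}(cs)$ factors) together with the truncated error sum $\sum_{l\le y}\mu_k(l)\Delta(a,b\times r;x/l^c)$, which is the term displayed in the statement.

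The real work is the tail $\sum_{l>y}\mu_k(l) G(x/l^c)$. Here I would insert the asymptotic expansion of $G$ again and reorganize: the contributions of the two power-type main terms of $G$, namely $x^{1/a}\sum_{l>y}\mu_k(l) l^{-c/a}$ and the analogous $x^{1/b}$-term, must be shown to be $O(x^\eps)$ after being completed back to full sums (the full sums $\sum_l \mu_k(l) l^{-c/a}$, $\sum_l\mu_k(l)l^{-c/b}$ converge since $c>2a$ forces $c/a>2>1$ and $c/b>1$, and — crucially — these completed tails are precisely what cancels against the over-counting in the short-range main terms, so that what is left from the tail of each power term is a genuinely small quantity). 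This is where RH enters: the partial sums $\sum_{l\le T}\mu_k(l) l^{-w}$ for $w$ near the critical line are controlled under RH by the classical bound $\sum_{l\le T}\mu(l)\ll T^{1/2+\eps}$ and its $k$-fold convolution analogue, which is what yields the term $x^{1/2a+\eps}y^{1/2-c/2a}$. The remaining piece, $\sum_{l>y}\mu_k(l)\Delta(a,b\times r;x/l^c)$, is absorbed into that same error by the trivial bound $\Delta(a,b\times r; u)\ll u^{1/b+\eps}\ll u^{1/2a+\eps}$ (valid since $2a\le b$) combined with $\sum_{l>y}|\mu_k(l)| l^{-c/2a}\ll y^{1-c/2a+\eps}$.

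The hard part will be bookkeeping the cancellation of main terms cleanly: one has to verify that the two residue contributions that Nowak obtains for the $r=1$, $k=1$ case generalize verbatim — i.e. that $\res_{s=1/a} F(s) x^s s^{-1}$ and $\res_{s=1/b} F(s) x^s s^{-1}$ are exactly reproduced by the short sum's main terms minus the completed tails — and that no new pole is introduced by the factor $\zeta^r(bs)$ at $s=1/b$ beyond what the statement's main term $\bigl(\res_{s=1/a}+\res_{s=1/b}\bigr)F(s)x^s s^{-1}$ already accounts for (it is a pole of order $r$, handled automatically by writing the residue rather than an explicit polynomial). I expect everything else to follow the template of \cite[Th.~2]{nowak1993} with only cosmetic changes, the exponents $1/2a$ and $1/2-c/2a$ emerging from the same optimization. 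I would close by noting that the hypothesis $2a\le b$ guarantees $\Delta(a,b\times r;\cdot)$ has exponent at most $1/2a$ so the truncated error term genuinely improves on the trivial bound once $y$ is chosen optimally in the applications.
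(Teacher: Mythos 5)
Your decomposition into the ranges $l\le y$ and $l>y$ is the same as the paper's, and your bookkeeping of the main terms (short-range residues plus completed tails reconstituting $\res_{s=1/a}F$ and $\res_{s=1/b}F$, with the completion at $s=1/b$ costing $O(x^{1/2a+\eps}y^{1/2-c/2a})$ under RH via $\sum_{l\le T}\mu_k(l)\ll T^{1/2+\eps}$) matches the paper's treatment of $S_1$. The gap is in the long range. You propose to insert the asymptotic expansion of $G$ into $\sum_{l>y}\mu_k(l)G(x/l^c)$ and to bound the leftover piece $\sum_{l>y}\mu_k(l)\Delta(a,b\times r;x/l^c)$ by a pointwise estimate $\Delta(a,b\times r;u)\ll u^{1/2a+\eps}$ together with $\sum_{l>y}|\mu_k(l)|\,l^{-c/2a}\ll y^{1-c/2a+\eps}$. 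That yields $x^{1/2a+\eps}y^{1-c/2a+\eps}$, which exceeds the claimed $x^{1/2a+\eps}y^{1/2-c/2a}$ by a factor of $y^{1/2}$; and since $\Delta(a,b\times r;x/l^c)$ oscillates in $l$ in an uncontrolled way, you cannot recover that $y^{1/2}$ by partial summation against $\sum_{l\le T}\mu_k(l)\ll T^{1/2+\eps}$. The loss is not cosmetic: the paper applies the lemma with $(a,b,c)=(1,2,3)$, where $b=2a$ and your bound is worse by exactly $y^{1/2}$ for every admissible $y$, so optimizing over $y$ would give a strictly larger final exponent $\beta$ than \eqref{eq:nowak-threeD-exponent}.

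The paper (following Nowak) avoids this by never expanding $S_2(x)=\sum_{l>y}\mu_k(l)G(x/l^c)$ termwise. Instead it sets $g(s):=\sum_{l>y}\mu_k(l)l^{-s}$, notes that under RH $g$ continues analytically to $\sigma>1/2+\eps$ with the uniform bound $g(s)\ll y^{1/2-\sigma+\eps}\bigl(|t|+1\bigr)^\eps$, applies Perron's formula to $F_2(s)=\zeta(as)\zeta^r(bs)g(cs)$, and moves the contour to $\sigma=1/2a+\eps$ — crossing only the pole at $s=1/a$, which is why the hypothesis $2a\le b$ is needed — while controlling the $\zeta$-factors by the Lindel\"of bound implied by RH. This gives $S_2(x)-\res_{s=1/a}F_2(s)x^ss^{-1}\ll y^{1/2-c/2a+\eps}x^\eps$ in one stroke; the contour integral is precisely where the cancellation in $\mu_k$ over $l>y$ is captured, and it is the step your proposal is missing. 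To repair your argument you would have to replace the elementary bound on the $\Delta$-tail by this Perron/contour-shift estimate of the whole of $S_2$.
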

\begin{proof}

For a fixed $y$ let us split $f(n)$ into two parts: $f(n) = f_1(n) \hm+ f_2(n)$, where
$$
f_1(n) = \sum_{l^cm=n \atop l\le y} \mu_k(l) \tau(a,b\times r;m),
\qquad
f_2(n) = \sum_{l^cm=n \atop l> y} \mu_k(l) \tau(a,b\times r;m).
$$
This split naturally implies a split of $S(x)$ into $S_1(x) := \sum_{n\le x} f_1(n)$ and~$S_2(x) \hm{:=} \sum_{n\le x} f_2(n)$. For $S_1$ we obtain
\begin{multline}\label{eq:nowak-s1}
S_1(x) = \sum_{l\le y} \mu_k(l) \sum_{m\le x/l^c} \tau(a,b\times r;m) = \left( \res_{s=1/a}+\res_{s=1/b} \right) F_1(s) x^s s^{-1}
+ \\
+ \sum_{l\le y} \mu_k(l) \Delta(a,b\times r; x/l^c),
\end{multline}
where
$$
F_1(s) := \zeta(as) \zeta^r(bs) \sum_{l\le y} {\mu_k(l) \over l^{cs}}.
$$
Here
\begin{multline*}
\res_{s=1/b} F_1(s) x^s s^{-1}
= x^{1/b} \sum_{l\le y} {\mu_k(l) \over l^{c/b}}
P(\log x)
- c x^{1/b} \sum_{l\le y} {\mu_k(l) \log l \over l^{c/b}} \cdot {1\over b} \zeta(a/b),
\end{multline*}
where $P$ is a polynomial with $\deg P = r-1$.
As soon as $\sum_{l\le x} \mu_k(l) \hm\ll x^{1/2+\eps}$ under RH we get
$$
x^{1/b+\eps} \sum_{l>y} \mu_k(l) l^{-c/b+\eps}
\ll x^{1/b+\eps} y^{1/2-c/b+\eps}
\ll x^{1/2a+\eps} y^{1/2-c/2a}.
$$
So
\begin{equation}\label{eq:nowak-res-f1}
\res_{s=1/b} F_1(s) x^s s^{-1}
= \res_{s=1/b} {\zeta(as)\zeta^r(bs)\over\zeta^k(cs)}
+ \OO(x^{1/2a+\eps} y^{1/2-c/2a}).
\end{equation}

Now consider $S_2$. Define
$$
g(s) := \sum_{l>y} {\mu_k(l) \over l^s} \qquad \text{for~} \sigma>1.
$$
Then under RH function $g$ can be continued analytically to $\sigma > 1/2+\eps$ and we have uniformly for all such $\sigma$ that
$$ g(s) \ll y^{1/2-\sigma+\eps} \bigl(|t|+1\bigr)^\eps $$
(see \cite[L. 3]{nowak1993} for the proof). Thus by Perron formula with $c=1/a+\eps$ and~$T=x^2$ we get
$$
S_2(x) = \int_{c-iT}^{c+iT} F_2(s) x^s s^{-1} \dd s + \OO(x^\eps),
$$
where $F_2(s) := \zeta(as) \zeta^r(bs) g(cs)$. Moving the line of integration to~$\sigma \hm= d \hm= 1/2a+\eps$ we obtain
\begin{multline}\label{eq:nowak-s2}
S_2(x) - \res_{s=1/a} F_2(s) x^s s^{-1}
\ll
\left( \int_{d+iT}^{c+iT} + \int_{d-iT}^{d+iT} + \int_{d-iT}^{d+iT} \right) F_2(s) x^s s^{-1} \dd s
=: \\
=: I_1 + I_2 + I_3.
\end{multline}
But for $\sigma > 1/2a$
$$
g(cs) \ll y^{1/2-c/2a+\eps} \bigl(|t|+1\bigr)^\eps.
$$
And RH implies Lindelöf hypothesis, so $\zeta(s) \ll \bigl(|t|+1\bigr)^\eps$ for $\sigma>1/2$. Thus
\begin{equation}\label{eq:nowak-i123}
I_1+I_2+I_3 \ll y^{1/2-c/2a+\eps} x^\eps.
\end{equation}
Combining of \eqref{eq:nowak-s1}, \eqref{eq:nowak-res-f1}, \eqref{eq:nowak-s2} and \eqref{eq:nowak-i123} completes the proof.
\end{proof}

\begin{theorem}
Under RH for a fixed $m>1$ we have
\begin{equation*}
\alpha_{m,3}  = {1-\theta(1,\m,\m) \over \m+2-2(\m+1)\theta(1,\m,\m)},
\end{equation*}
where $\alpha_{m,3}$ is an exponent in \eqref{eq:Emtauk-estimate}.
\end{theorem}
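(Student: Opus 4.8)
I would follow the scheme of the proof of Theorem~\ref{th:Emtau-under-RH}, feeding in its generalization, Lemma~\ref{l:nowak-generalized}, in place of Nowak's theorem. Specializing \eqref{eq:Emtauk-zeta-expansion} to $k=3$ gives
$$
\sum_{n=1}^\infty \frac{E^m\tau_3(n)}{n^s}
= \frac{\zeta(s)\,\zeta^2(\m s)}{\zeta^2\bigl((\m+1)s\bigr)}\cdot\frac{H(s)}{\zeta^3(2\m s)},
$$
so I would write $E^m\tau_3 = f \star h$, where $f$ is the multiplicative function with Dirichlet series $\zeta(s)\zeta^2(\m s)/\zeta^2\bigl((\m+1)s\bigr)$ and $h$ has series $\zeta^{-3}(2\m s)H(s)$, which is absolutely convergent for $\sigma>1/2\m$ by \eqref{eq:Emtauk-zeta-expansion}. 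The pole of $\sum f(n)n^{-s}$ at $s=1$ is simple and the pole at $s=1/\m$ is double, so $\sum_{n\le x}f(n)$ carries a main term of the shape $K'x + x^{1/\ms}(r_1'\log x + r_0')$; after convolution with $h$ these reassemble into the main terms $K_{m,3}x + x^{1/\ms}R_{m,1}(\log x)$ of \eqref{eq:Emtauk-estimate}.

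The heart of the argument is to estimate $\Delta_f(x)$, the error term of $\sum_{n\le x}f(n)$, by applying Lemma~\ref{l:nowak-generalized} with $a=1$, $b=\m$, $r=2$, $c=\m+1$, $k=2$; the hypothesis $2a\le b<c<2(a+b)$ reads $2\le\m<\m+1<2\m+2$, which holds since $\m\ge16$ for $m>1$. The lemma then gives, for every $1\le y\le x^{1/(\m+1)}$,
$$
\Delta_f(x) = \sum_{l\le y}\mu_2(l)\,\Delta(1,\m,\m;x/l^{\m+1}) + O\bigl(x^{1/2+\eps}y^{-\m/2} + x^\eps\bigr).
$$
Substituting $\Delta(1,\m,\m;u)\ll u^{\theta(1,\m,\m)+\eps}$ and using $\theta(1,\m,\m)<1/(\m+1)$ --- which is \eqref{eq:sum-of-tau1mm-precise} when $m\ge3$ and the corresponding row of Table~\ref{tbl:tau1mm-precise} when $m=2$ --- the $l$-sum is $\ll x^{\theta(1,\m,\m)+\eps}\,y^{1-(\m+1)\theta(1,\m,\m)}$, increasing in $y$, whereas $x^{1/2}y^{-\m/2}$ is decreasing. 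Balancing the two forces $y = x^{(1-2\theta(1,\m,\m))/(\m+2-2(\m+1)\theta(1,\m,\m))}$, and this $y$ lies in $[1,x^{1/(\m+1)}]$ because the required inequality $(\m+1)(1-2\theta)\le\m+2-2(\m+1)\theta$ collapses to $\m+1\le\m+2$. A short computation then shows the common value of the two terms equals $x^{\alpha_{m,3}+\eps}$ with $\alpha_{m,3}$ exactly as stated, so $\Delta_f(x)\ll x^{\alpha_{m,3}+\eps}$.

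Finally I would pass from $f$ to $E^m\tau_3 = f\star h$ by the convolution method: since $\alpha_{m,3}>1/2\m$ --- equivalently $\m>2-2\theta(1,\m,\m)$, immediate from $\m\ge16$ --- the exponent $\alpha_{m,3}$ exceeds the abscissa of absolute convergence of $\sum h(n)n^{-s}$, so in $\sum_{n\le x}(f\star h)(n)=\sum_d h(d)\sum_{n\le x/d}f(n)$ the main parts recombine into $K_{m,3}x + x^{1/\ms}R_{m,1}(\log x)$ while the remaining terms contribute $O(x^{\alpha_{m,3}+\eps})$, which finishes the proof. The only genuinely delicate ingredient is the uniform bound $\theta(1,\m,\m)<1/(\m+1)$ for all $m>1$, which is exactly why \eqref{eq:sum-of-tau1mm-precise} is recorded for $r\ge10$ and complemented by Table~\ref{tbl:tau1mm-precise} in the remaining case $m=2$; everything else is bookkeeping --- checking the hypotheses of Lemma~\ref{l:nowak-generalized}, keeping the optimized $y$ in its admissible interval, and verifying $\alpha_{m,3}>1/2\m$.
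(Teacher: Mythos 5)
Your proposal is correct and follows essentially the same route as the paper: the paper's proof is a two-line sketch instructing the reader to repeat the argument of Theorem~\ref{th:Emtau-under-RH} with Lemma~\ref{l:nowak-generalized} (taking $r=2$, $a=1$, $b=\m$, $c=\m+1$, $k=2$) in place of Nowak's theorem, and your write-up supplies exactly the details that sketch omits — the balancing choice of $y$, the admissibility checks, the bound $\theta(1,\m,\m)<1/(\m+1)$, and the final convolution step.
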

\begin{proof}
Similar to Theorem \ref{th:Emtau-under-RH} with the use of Lemma \ref{l:nowak-generalized} with $r=2$ instead of Nowak's result. Equation \eqref{eq:nowak-result} transforms into
\begin{equation*}\label{eq:nowak-threeD-result}
\sum_{n\le x} (\tau(1,b,b; \cdot)\star \mu_c)(n)
= C_1 x^{1/a}
+ (C_2\log x+C_3) x^{1/b}
+ \OO(x^{\beta+\eps}),
\end{equation*}
with
\begin{equation}\label{eq:nowak-threeD-exponent}
\beta = {1-a\theta(a,b,b) \over a+c-2ac\theta(a,b,b)}.
\end{equation}
\end{proof}

We give one more application of Lemma \ref{l:nowak-generalized} on somewhat off-topic function. Define~$\tt\colon \Z[i]\to\Z$, where $\tt(n)$ is a number of $d\in\Z[i]$ such that $d \mid n$ over~$\Z[i]$. Consider~$\taue_* := E(\tt|_{\Z})\colon \Z\to\Z$. We proved in \cite[(10)]{lelechenko2013-ntsh} that
$$
\sum_{n=1}^\infty {\taue_*(n) \over n^s}
= {\zeta(s) \zeta^2(2s) \over \zeta(3s)} H(s),
$$
where $H(s)$ converges absolutely for $\sigma > 1/5$, and due to \cite[Th. 3]{lelechenko2013-ntsh}
$$
\sum_{n\le x} \taue_*(n) =
A_1 x + (A_2 \log x + A_3) x^{1/2} + \OO(x^{1/3+\eps}).
$$
Lemma \ref{l:nowak-generalized} shows that the last estimate can be improved under RH. Taking into account \eqref{eq:nowak-threeD-exponent} with $(a,b,c)=(1,2,3)$, we obtain for $\theta(1,2,2) \hm= 8/25$ that
$$
\beta = {1-\theta(1,2,2) \over 4-6\theta(1,2,2)}
= {17\over52} < 1/3.
$$

\section{Generalization}

Consider an arbitrary multiplicative function $f$. Define
$$
A(f) := \{ n \mid f(n)\ne1 \},
\qquad
n(f) := \min A(f),
$$ $$
B(f) := A(f) \setminus (2\N+1) n(f),
\qquad
n'(f) := \min (A(f) \setminus \{n(f)\} ).
$$
Then
$n(E^m f) = 2^{n(E^{m-1} f)}.$

\begin{lemma}\label{l:lowest-arguments}
Let $f$ be an arbitrary arithmetic function. For every $k\in\N$ there exists sufficiently large, constructively defined $m_0 = m_0(k,f)$ such that for $m\ge m_0$ integers
$$
n(E^m f), 3 n(E^m f), \ldots, (2k-1) n(E^m f)
$$
are $k$ lowest elements of $A(E^m f)$.
\end{lemma}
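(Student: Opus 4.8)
The plan is to generalize to an arbitrary $f$ the argument already used above for $E^m\tau$, at the cost of requiring $m$ large relative to $k$. First I would discard the trivial case $f\equiv1$ (then $A(E^mf)=\emptyset$ and the assertion is vacuous) and fix notation: write $n_j:=n(E^jf)$, so that $n_{j+1}=2^{n_j}$ and, since $n_1=2^{n_0}\ge2$, the sequence $(n_j)_{j\ge1}$ is a strictly increasing tower of twos, hence unbounded. This unboundedness is what makes $m_0$ constructive: I would take $m_0=m_0(k,f)$ to be the least integer $m\ge3$ with $n_{m-1}>\log_{3/2}(2k-1)$, and then work with a fixed $m\ge m_0$, writing $n:=n_m=2^{n_{m-1}}$.

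The one delicate point — and the step I expect to be the main obstacle — is that for a complex‑valued $f$ the set $A(E^mf)$ is not cleanly described. From $(E^mf)(N)=\prod_{p\mid N}(E^{m-1}f)(v_p(N))$ one only gets that $N\in A(E^mf)$ \emph{implies} that some $v_p(N)\in A(E^{m-1}f)$; the converse may fail, because a product of non‑unit values could accidentally equal~$1$. So I must use the forward implication only for lower bounds, and the reverse implication only in places where the remaining factors are provably~$1$. Everything below is arranged around this restriction.

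The argument then splits into three parts. \emph{Part 1 (the claimed elements lie in $A(E^mf)$).} For $1\le j\le k$, multiplicativity of $E^mf$ together with $(E^mf)(p^b)=(E^{m-1}f)(b)$ gives
$$
(E^mf)\bigl((2j-1)n\bigr)=(E^{m-1}f)(n_{m-1})\prod_{q\mid 2j-1}(E^{m-1}f)\bigl(v_q(2j-1)\bigr),
$$
and since every exponent $v_q(2j-1)\le\log_2(2k-1)<n_{m-1}=\min A(E^{m-1}f)$, the product is $1$ while the leading factor is $\ne1$; hence $(2j-1)n\in A(E^mf)$. \emph{Part 2 (a uniform gap).} I would prove that for every $\ell\ge2$ every element of $A(E^\ell f)$ other than $n_\ell$ is at least $2n_\ell$; equivalently $n'(E^\ell f)\ge2n(E^\ell f)$. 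Using only the safe implication: if $(E^\ell f)(N)\ne1$ then some prime $p$ has $v_p(N)\in A(E^{\ell-1}f)$, so $v_p(N)\ge n_{\ell-1}$, and splitting into the cases $p\ge3$ (then $N\ge3^{n_{\ell-1}}\ge2n_\ell$, as $n_{\ell-1}\ge2$), $p=2$ with $v_2(N)>n_{\ell-1}$ (then $N\ge2^{n_{\ell-1}+1}=2n_\ell$), and $p=2$ with $v_2(N)=n_{\ell-1}$ (then $N=2^{n_{\ell-1}}u$ with $u$ odd, and $u\ge3$ unless $N=n_\ell$) settles it.

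\emph{Part 3 (nothing else is small).} Given $N\in A(E^mf)$ with $N\le(2k-1)n$, some prime $p$ has $v_p(N)\ge n_{m-1}$; if any such $p$ were $\ge3$ then $N\ge3^{n_{m-1}}=(3/2)^{n_{m-1}}2^{n_{m-1}}>(2k-1)n\ge N$ by the choice of $m_0$, which is absurd, so $v_2(N)\in A(E^{m-1}f)$. From $2^{v_2(N)}\mid N\le(2k-1)2^{n_{m-1}}$ I get $v_2(N)<2n_{m-1}$ (again using $\log_2(2k-1)<n_{m-1}$), whereas Part~2 applied with $\ell=m-1\ge2$ forces $v_2(N)\ge2n_{m-1}$ unless $v_2(N)=n_{m-1}$; hence $v_2(N)=n_{m-1}$, so $N=2^{n_{m-1}}u$ with $u=N/n$ odd and $\le2k-1$, i.e. $N\in\{n,3n,\dots,(2k-1)n\}$. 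Combined with Part~1 this shows $A(E^mf)\cap[1,(2k-1)n]$ is exactly that $k$‑element set, which is therefore the list of the $k$ least elements of $A(E^mf)$. Apart from the complex‑value issue flagged above, the only care required is bookkeeping the inequalities so that the single condition $n_{m-1}>\log_{3/2}(2k-1)$, together with $m\ge3$ for Part~2, does suffice at every step.
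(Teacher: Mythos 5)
Your argument is correct and is essentially the paper's proof: the same case analysis on which prime carries an exponent lying in $A(E^{\ell-1}f)$ yields the bound $\min B(E^\ell f)\ge\min\bigl\{3^{n(E^{\ell-1}f)},2^{n'(E^{\ell-1}f)}\bigr\}$, the same propagated gap $n'(E^\ell f)\ge 2n(E^\ell f)$ is used, and $m_0$ is chosen so that $(3/2)^{n(E^{m-1}f)}$ exceeds $2k-1$, exactly as in the paper (you merely prove the gap directly for $\ell\ge2$ where the paper instead normalizes $f$ by replacing it with $Ef$ or $E^2f$). Your Part~1 --- verifying that the odd multiples $(2j-1)n(E^mf)$ genuinely belong to $A(E^mf)$ even for complex-valued $f$, because every cofactor exponent falls below $n(E^{m-1}f)$ --- makes explicit a step the paper leaves implicit, and is a worthwhile addition.
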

\begin{proof}
Without loss of generality we can suppose that $f$ is multiplicative prime-independent and~$n(f)$ is a power of 2. On contrary we take~$Ef$ instead of $f$.

We can also suppose without loss of generality that $n'(f) \ge 2 n(f)$. On contrary we can take $Ef$ instead of $f$ once again, because
$n'(f) \hm\ge n(f)+1$ and
$$
n'(Ef) \ge \min \left\{ 3 n(Ef), \min B(Ef) \right\},
$$
where
$$
\min B(Ef) \ge \min \left\{ 3^{n(f)}, 2^{n'(f)} \right\} \ge \min \left\{ (3/2)^2 \cdot 2^{n(f)}, 2^{n(f)+1}  \right\} = 2 n(Ef).
$$

Surely if $n'(f) \ge 2 n(f)$ then for every $m$ we have $n'(E^m f) \hm\ge 2 n(E^m f)$ too.

\smallskip

Now to prove the statement of the lemma is enough to show that
$$\min B(E^m f) \ge 2 k n(E^m f).$$
As soon as sequence $n(f), n(Ef), n(E^2f)\ldots$ is tending to $+\infty$, we can choose $m$ such that
$
n(E^m f) \ge 2k $
 and
$(3/2)^{n(E^{m-1}f)} \ge 2k
$.
Then
\begin{multline*}
\min B(E^m f) \ge \min\left\{ 3^{n(E^{m-1}f)}, 2^{n'(E^{m-1}f)} \right\}
\ge \\ \ge
\min \left\{ 2kn(E^m f), \bigl( n(E^mf) \bigr)^2 \right\} \ge 2kn(E^m f).
\end{multline*}

\end{proof}

\begin{lemma}
Let $f$ be an arbitrary arithmetic function and let $m_0 \hm= m_0(2,f)$ be as in Lemma \ref{l:lowest-arguments}. For a fixed integer $m\ge m_0$ we have
$$
\sum_{n=1}^\infty {E^mf(n) \over n^s} = \zeta(s)
\left( { \zeta\bigl(n(E^mf) s\bigr) \over \zeta\Bigl( \bigl( n(E^mf)+1 \bigr) s \Bigr) } \right)^{f(n(f))-1} H(s),
$$
where $H(s)$ converges absolutely for
$
\sigma > 1/\bigl( 2n(E^m f) \bigr)
$.
\end{lemma}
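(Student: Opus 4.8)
The plan is to imitate the proof of~\eqref{eq:Emtauk-zeta-expansion}: split the series into a single local factor, expand that factor far enough, and recognise the resulting $\zeta$-quotient. Put $N:=n(E^m f)$ and $c:=f(n(f))$. Since $E^m f\in{\mathcal M}_{PI}$, we have $\sum_{n\ge1}E^m f(n)n^{-s}=\prod_p S(p^{-s})$, where $S(x):=\sum_{a\ge0}E^m f(p^a)\,x^a$; this $S$ does not depend on $p$, since $E^m f(p^a)$ is a function of the exponent $a$ alone.

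The heart of the matter is the expansion of $S(x)$. First, $E^m f$ takes the value $c$ at the least argument of its support: the relations $n(E^j f)=2^{\,n(E^{j-1}f)}$ and $(Eg)(2^a)=g(a)$ give $E^j f\bigl(n(E^j f)\bigr)=E^{j-1}f\bigl(n(E^{j-1}f)\bigr)$, and iterating this down to $j=0$ yields $f(n(f))=c$. Second, Lemma~\ref{l:lowest-arguments} with $k=2$ (admissible because $m\ge m_0(2,f)$) says that $N$ and $3N$ are the two smallest elements of $A(E^m f)$, so $E^m f$ equals $1$ on every argument strictly between them. Consequently
$$S(x)=\sum_{a\ge0}x^a+(c-1)x^{N}+O(x^{3N}),$$
and hence, carrying the remaining computation modulo $O(x^{2N})$,
\begin{eqnarray*}
(1-x)S(x)&=&1+(c-1)x^{N}-(c-1)x^{N+1}+O(x^{3N}),\\
(1-x)(1-x^{N})^{c-1}S(x)&=&1-(c-1)x^{N+1}+O(x^{2N}),\\
\frac{(1-x)(1-x^{N})^{c-1}}{(1-x^{N+1})^{c-1}}\,S(x)&=&1+O(x^{2N}).
\end{eqnarray*}
Here $(1-x^{N})^{c-1}$ and $(1-x^{N+1})^{-(c-1)}$ are read as binomial series, matching the meaning of $\zeta^{c-1}$ in the statement; no integrality of $c$ is needed.

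Forming the Euler product, the last identity gives $\sum_{n\ge1}E^m f(n)n^{-s}=\zeta(s)\bigl(\zeta(Ns)/\zeta((N+1)s)\bigr)^{c-1}H(s)$ with $H(s)=\prod_p\bigl(1+O(p^{-2Ns})\bigr)$, which converges absolutely for $2N\sigma>1$, that is $\sigma>1/(2N)$; this is the claimed formula. The only real work lies in the displayed expansion of $S(x)$: reading off the coefficient $c$ of $x^{N}$ via the recursion, and --- above all --- knowing that no further contribution appears before the exponent $3N$. The latter is precisely Lemma~\ref{l:lowest-arguments} for $k=2$, and is the sole place where the hypothesis $m\ge m_0$ enters; taking larger $k$ would let one continue the expansion and peel off further $\zeta$-factors (as in the sharper identity~\eqref{eq:Emtau-zeta-expansion} for $\tau$), but $k=2$ already delivers the stated half-plane $\sigma>1/(2N)$.
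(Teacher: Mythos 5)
Your proposal is, step for step, the paper's own proof: the same expansion of the local Euler factor $S(x)$, the same multiplication by $(1-x)\bigl((1-x^{N})/(1-x^{N+1})\bigr)^{c-1}$ to reach $1+O(x^{2N})$, and the same passage to the Euler product; you add only two details the paper leaves implicit (the recursion giving $E^mf\bigl(n(E^mf)\bigr)=f\bigl(n(f)\bigr)$, and the binomial-series reading of the exponent $c-1$). As a reconstruction of the paper's argument it is on target.

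I must, however, flag one step that you and the paper share and that does not survive a literal reading of the definitions. The coefficient of $x^a$ in $S(x)=\sum_{a\ge0}E^mf(p^a)x^a$ is $E^mf(p^a)=E^{m-1}f(a)$, so it differs from $1$ precisely when $a\in A(E^{m-1}f)$ --- not when $a\in A(E^mf)$. Lemma \ref{l:lowest-arguments} locates the smallest \emph{arguments} $n$ at which $E^mf(n)\ne1$, whereas the expansion of $S$ requires the smallest \emph{exponents} $a$ at which $E^mf(p^a)\ne1$, and these sets are related by the equivalence of $p^a\in A(E^mf)$ with $a\in A(E^{m-1}f)$. Consequently the first deviation of $S(x)$ from $1/(1-x)$ occurs already at $a=n(E^{m-1}f)=\log_2 N$, not at $a=N$, and the displayed expansion $S(x)=\sum_a x^a+(c-1)x^{N}+O(x^{3N})$ omits it. (Sanity check against the one classical case: for $\tau^{(e)}=E\tau$ the local factor is $\sum_a\tau(a)x^a$, deviating at $a=2=n(\tau)$ and producing the factor $\zeta(2s)$ in \eqref{eq:Etauk-zeta-expansion}; i.e.\ the zeta-argument is $n(E^{m-1}f)$, one level down from $n(E^{m}f)$.) The cure is to run your computation with $N$ replaced by $n(E^{m-1}f)$, applying Lemma \ref{l:lowest-arguments} at level $m-1$, which changes the arguments of the zeta-factors in the statement accordingly. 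Since the paper's own proof makes the identical identification, this is not a defect of your write-up relative to the paper --- but it is the one point at which reproducing the paper's proof and giving a correct proof diverge.
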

\begin{proof}
By Lemma \ref{l:lowest-arguments} all values $E^mf(1),\ldots,E^mf\bigl(3n(E^mf)-1\bigr)$ equals to 1 except of $E^mf\bigl(n(E^mf)\bigr) = f\bigl(n(f)\bigr)$. Thus
$$
S(x) := \sum_{n\ge0} E^mf(p^n) x^n
= {1\over1-x} + \Bigl(f\bigl(n(f)\bigr)-1\Bigr) x^{n(E^mf)} + \OO\left(x^{3n(E^mf)}\right),
$$
$$
(1-x)
\left( {
1-x^{n(E^mf)} \over 1-x^{n(E^mf)+1}
} \right)^{f(n(f))-1}
S(x)
= 1 + \OO\left(x^{2n(E^mf)}\right).
$$
This asymptotic identity implies lemma's statement.
\end{proof}

Now if $f\bigl(n(f)\bigr) \in \N$ then Theorem \ref{th:toth-improved} with
$
k = f\bigl(n(f)\bigr)$,
$l = n(E^mf)
$
can be applied on $E^m f$.

\bibliographystyle{ugost2008s}
\bibliography{taue}

\end{document}